\newcommand{\bG}{{\mathbf{G}}}
\newcommand{\bB}{{\mathbf{B}}}
\newcommand{\bT}{{\mathbf{T}}}
\newcommand{\bU}{{\mathbf{U}}}
\newcommand{\bH}{{\mathbf{H}}}
\newcommand{\Z}{{\mathbb{Z}}}
\newcommand{\C}{{\mathbb{C}}}
\newcommand{\F}{{\mathbb{F}}}
\newcommand{\T}{{\mathbb{T}}}
\newcommand{\lin}{{\mathrm{lin}}}
\newcommand{\GL}{{\operatorname{GL}}}
\newcommand{\Tr}{\operatorname{Tr}}
\newcommand{\height}{\operatorname{ht}}
\newcommand{\dfour}[4]{\begin{footnotesize}\ensuremath{\hspace{-0.1cm}\begin{array}{ccc}#1\vspace{-2mm}\\ &\hspace{-2mm} #3 &\hspace{-2mm} #4 \vspace{-2mm} \\ #2 \\\end{array}\hspace{-0.1cm}}\end{footnotesize}}
\newcommand{\bs}{\mathbf s}
\newcommand{\bt}{\mathbf t}
\newcommand{\cC}{{\mathcal{C}}}
\newcommand{\cF}{{\mathcal{F}}}
\newtheorem{theorem}{Theorem}[section]
\newtheorem{lemma}[theorem]{Lemma}
\newtheorem{proposition}[theorem]{Proposition}
\theoremstyle{remark}
\newtheorem{remark}[theorem]{Remark}
\numberwithin{equation}{section}
\begin{document}

\title[The generic character table of $UD_4(p^a)$]{The generic character table of a Sylow $p$-subgroup of a finite Chevalley group of type $D_4$}

\author{Simon M.~Goodwin, Tung Le and Kay Magaard}

\address{School of Mathematics, University of Birmingham,
Birmingham, B15 2TT, U.K.} \email{s.m.goodwin@bham.ac.uk} \email{k.magaard@bham.ac.uk}

\address{Department of Mathematics and Applied Mathematics, University of Pretoria, Pretoria 0002, South Africa}
\email{lttung96@yahoo.com}

\begin{abstract}
Let $U$ be a Sylow $p$-subgroup of the finite Chevalley group of type $D_4$
over the field of $q$ elements, where $q$ is a power of a prime $p$. We describe a construction of
the generic character table of $U$.
\end{abstract}

\maketitle

\section{Introduction} \label{sec:intro}

Let $p$ be a prime and let $q$ be a power of $p$.  Let $G$ be
a finite reductive group over the field of $q$ elements,
and let $U$ be a Sylow $p$-subgroup of $G$.
The aim of this paper is to describe a construction of the
{\em generic character table} of $U$ for the case where $G$ is split and of type $D_4$.
This generic character table can be determined immediately
from the sequence of propositions and lemmas in Section \ref{sec:main}.
The construction is ``uniform'' over primes $p > 2$, but we observe
differences for the bad prime $p = 2$.

There has been much research into constructing generic character tables of $G$.  For $G$
of small rank these have been programmed into the computing system CHEVIE, \cite{CHEVIE}.
More recently there has been interest in generic character tables for Borel and parabolic
subgroups of $G$, see for example \cite{Hi1}, \cite{Hi3}, \cite{HH1}, \cite{HH1b} and \cite{HN2},
where many cases for $G$ of $\F_q$-rank $3$ or less are considered.
These papers and similar methods have led to applications in determining decomposition
numbers in the modular character theory  of $G$,
see for example, \cite{AH2}, \cite{Hi2}, \cite{Hi4}, \cite{HH2}, \cite{HN1}, \cite{OW} and \cite{Wa}.

There has been a great deal of research interest in the character theory of $U$,
with motivation coming from understanding how this character theory varies with $q$.

Interest in properties of $U$ for $G = \GL_n(q)$ goes back to
G.~Higman \cite{Higman}, where the conjugacy classes of $U$ are considered.
In \cite{Lehrer}, G.~Lehrer determined how discrete series characters
decompose when restricted to $U$ for $G = \GL_n(q)$.
This led to a study of the irreducible characters of $U$ and in
particular it was conjectured that the degrees of the irreducible characters are always
a power of $q$, and that the number of characters of degree $q^d$, for $d \in \Z_{\ge 0}$ is
given by a polynomial in $q$ with integer coefficients;
this refines a well known conjecture attributed to Higman.
There has been further work in this direction, for $G = \GL_n(q)$, by I.~M.~Isaacs,
and more generally on the character theory of algebra groups, see \cite{IsaacsCharAlg}
and \cite{Is}.  In particular, Isaacs verified that the degree of any irreducible character
of an algebra group over $\F_q$ is a power of $q$.
For other recent developments we refer, for example, to the work of A.~Evseev \cite{EV}, E.~Marberg \cite{Marb},
and I.~Pak and A.~Soffer \cite{PS}; in particular, we remark that the results of
\cite{PS} suggest that the aforementioned conjecture on the number of characters being
given by a polynomial in $q$ is false.
An interesting aspect of the character theory of $U$ is the theory of supercharacters, which was first
studied by Andr\'e, see for example \cite{An}; this theory was fully developed by Diaconis and Isaacs in \cite{DI}.

There has been much interest on the complex irreducible
characters of Sylow $p$-subgroups of finite reductive groups of other types.
For instance, it has been proved that for $G$ of type $B$, $C$ or $D$ the degrees of all irreducible
characters of $U(q)$ are powers of $q$ if and only if $p \ne 2$, see \cite{Pr2}, \cite{Sa2} and \cite{Szeg}.
There has been much further interest in determining certain families of irreducible character,
see for example \cite{GMP}, \cite{Marj}, \cite{Sa3} and \cite{Szec}.
Recently, a notion of supercharacters of $U$ for $G$ of types $B$, $C$ and $D$ was introduced by Andr\'e and
Neto in \cite{AN}.
For $p$ larger than the Coxeter number of $G$, the Kirillov orbit method
gives a bijection between the coadjoint orbits of $U$ on its Lie algebra and
the irreducible characters of $U$; in \cite{GMR2} an algorithm is given, which is used
to calculate a parametrization of these coadjoint orbits when $G$ is split and of rank less than or equal to 8, excluding type $E_8$.
We refer the reader also to
\cite{LM} for other results on the parametrization
of irreducible characters of $U$.

From now on we suppose $G$ is a finite Chevalley group of type $D_4$.
The two key
ingredients that enable us to calculate the generic
table of $U$ are the papers
\cite{HLM} and \cite{GoodwinRoehrle}, which provide
a parametrization of the irreducible characters
and the conjugacy classes of $U$; we also use \cite{BRGO} for the conjugacy classes
for the case $p=2$.
Our construction of this generic character table
is given in such a way that it is straightforward
to convert it to a computer file that can be used in future
calculations.

Our parametrization of the characters and conjugacy
classes of $U$ partitions them into families that are independent of $q$, as can
be observed in Tables \ref{tab:representatives} and \ref{tab:irrU}.
We have refined the parameterizations from \cite{GoodwinRoehrle} and \cite{HLM},
so that they are stable under the automorphisms
of $G$ coming from the graph automorphisms of the Dynkin diagram;
this is likely to be of significant benefit for applications.
Our calculations are based
on developing the methods from \cite{HLM}.  A key additional ingredient is
that we modify those
constructions making them more explicit by  realizing the
irreducible characters of $U$ as induced characters
of linear characters of certain subgroups.

By the {\em generic character table} we mean the
table with rows labelled by the families of the characters, and columns
labelled by the families of representatives of conjugacy classes.
The values in the table are given by expressions depending
on the parameters for the families and on $q$.
This can be viewed as an extension
of the ideas discussed above regarding how the character theory of
$U$ varies with $q$.

We observe that the characters and their values are uniform for $p > 2$,
whereas for $p=2$ there are differences. More explicitly the parametrization
of the family of irreducible characters $\cF_{8,9,10}$ is different
for $p=2$, and although the parametrization of the characters in the families $\cF_{11}$
and $\cF_{12}$ is not different for $p=2$, the values of these
characters do differ.  These differences
give some explanation of why $p=2$ is a bad prime for $D_4$.

We summarize our results in the following theorem.

\begin{theorem} \label{mainthm}
Let $G$ be a a finite Chevalley group of type $D_4$ over $\F_q$ and let $U$ be
a Sylow $p$-subgroup of $G$.  The generic character table of $U$ is determined
from: the parametrization of the conjugacy classes given in Section \ref{sec:classes}; the
parametrization of the irreducible characters given in Section \ref{sec:chars}; and
the character values given in Section \ref{sec:main}.
\end{theorem}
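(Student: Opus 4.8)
The plan is to read Theorem~\ref{mainthm} as the assertion that Sections~\ref{sec:classes}, \ref{sec:chars} and~\ref{sec:main} together produce a correct and complete character table, so the proof reduces to three tasks: showing that the list of conjugacy class representatives in Section~\ref{sec:classes} is complete and irredundant; showing that the list of irreducible characters in Section~\ref{sec:chars} is complete and irredundant; and showing that the value of each character on each class representative recorded in Section~\ref{sec:main} is correct. For all of this I would first fix the twelve positive roots of the root system of type $D_4$ and coordinatize $U$ by writing a general element as an ordered product $\prod_\alpha x_\alpha(t_\alpha)$ of root group elements, so that multiplication and conjugation are governed explicitly by the Chevalley commutator relations; this renders every subsequent computation explicit and polynomial in $q$, and also makes transparent the action of the graph automorphisms of the $D_4$ diagram, which I would use throughout both to reduce the work and to enforce the claimed symmetry of the parametrizations.

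For the conjugacy classes I would take as input the parametrization produced by the algorithm of \cite{GoodwinRoehrle} (as extended in \cite{GMR}), refine the choice of representatives so that the families are permuted by the graph automorphisms, and treat $p=2$ separately using \cite{BRGO}. To certify that Table~\ref{tab:representatives} lists a genuine set of class representatives I would, for each family, compute the centralizer $C_U(g)$ directly from the commutator relations, check that distinct families yield non-conjugate elements by tracking which coordinates conjugation can alter, and verify the counting identity obtained by summing $|U : C_U(g)|$ over one representative from each family, with multiplicity the number of classes in that family, and comparing with $|U| = q^{12}$.

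For the irreducible characters I would recall from \cite{HLM} the partition of $\Irr(U)$ into the families $\cF_1,\dots,\cF_{12}$, indexed by the root subgroups lying in the centre but not the kernel of a character, and then carry out the key additional step described in the introduction: realizing each character explicitly as $\mathrm{Ind}_H^U \lambda$ for a suitable subgroup $H \le U$ and linear character $\lambda$ of $H$. I would establish irreducibility by the Mackey criterion, verifying that $\lambda$ and $\lambda^g$ differ on $H \cap g^{-1}Hg$ for every $g \in U \setminus H$ --- equivalently, by a Clifford-theoretic ascent along a chain of normal subgroups of $U$ --- deduce distinctness within and between families by comparing the central and kernel data, and confirm completeness via $\sum_{\chi \in \Irr(U)} \chi(1)^2 = q^{12}$. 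The family $\cF_{8,9,10}$ needs a genuinely different treatment when $p=2$, which I would obtain by adapting the same inductive construction; I expect the point to be that a quadratic form governing the fibres of the parametrization for odd $p$ degenerates at $p=2$, altering both the number of such characters and their degrees.

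With the induced realizations in hand, the value $\chi(g)$ for $\chi = \mathrm{Ind}_H^U \lambda$ is computed from $\chi(g) = \frac{1}{|H|}\sum_{x \in U,\ x^{-1}gx \in H}\lambda(x^{-1}gx)$, which I would reduce to a sum over a transversal of $H$ in $U$ and evaluate using the commutator relations; the result is in each case an expression in $q$ and the family parameters, involving at worst a quadratic Gauss sum. The main obstacle I anticipate is precisely this step for the large families $\cF_{11}$ and $\cF_{12}$, and for the modified $\cF_{8,9,10}$ when $p=2$: the relevant exponential sums break into several cases according to vanishing conditions on the class parameters, the Gauss-sum factors behave differently for $p=2$ (explaining why $\cF_{11}$ and $\cF_{12}$ take different values there even though their parametrization is unchanged), and keeping all of these case distinctions compatible with the graph-automorphism-stable labelling is delicate. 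To guard against error I would check, for every family, the column orthogonality relations $\sum_{\chi \in \Irr(U)} |\chi(g)|^2 = |C_U(g)|$ against the centralizer orders found above, together with the row orthogonality relations; simultaneous agreement of all of these is strong evidence of correctness, and for small $q$ the table can additionally be checked directly by computer.
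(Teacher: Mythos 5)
Your proposal matches the paper's strategy essentially point for point: class representatives are taken from the minimal-representative algorithm of \cite{GoodwinRoehrle} (with \cite{BRGO} for $p=2$), each character is realized as a linear character induced from an explicit subgroup of a quotient $U/M_i$, irreducibility and distinctness are settled by Clifford theory for special groups of type $q^{1+2m}$, completeness follows by counting against \cite{HLM}, and the values are computed from the induction formula via the commutator relations, with quadratic Gauss sums appearing exactly where you predict. The only cosmetic difference is that the paper certifies completeness by matching the character counts of \cite{HLM} rather than by orthogonality relations, but this is the same argument in substance.
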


The methods in this paper serve as a model, which can be applied
to determine generic character tables of Sylow $p$-subgroups of finite groups of
Lie type, where $p$ is the defining characteristic.

We outline the structure of paper.  First we recall the required theory and notation
regarding the finite Chevalley group of type $D_4$ in Section \ref{sec:nota}.  Then
in Sections \ref{sec:classes} and \ref{sec:chars}, we recall the results from \cite{BRGO}, \cite{GMR} and
\cite{HLM} giving the parametrization of the conjugacy classes and irreducible characters of
$U$.  The majority of the work is done in Section \ref{sec:main}, where we determine
the character values.

\medskip
\noindent
\textbf{Acknowledgements:}
We would like to thank the referee for many helpful suggestions that led to
improvements in the paper.  We also thank A.~Paolini for pointing out
some corrections.  Part of this research was
completed during a visit of the second author to the University of Birmingham; we thank
the LMS for a grant to cover his expenses.

\section{Notation and Setup}
\label{sec:nota}

Let $p$ be a prime, let $q$ be a power of $p$, and denote the finite
field with $q$ elements by $\F_q$.
We let $\bG$ denote a simply connected simple algebraic group of type $D_4$ defined and split
over $\F_p$; in other words $\bG$ is $\mathrm{Spin}_8$ defined over $\F_p$.
  Let $\bB$ be a Borel subgroup of $\bG$ defined over $\F_p$, let $\bU$
be the unipotent radical of $\bB$, and $\bT \subseteq \bB$ be a maximal
torus defined over $\F_p$.  For a subgroup $\bH$ of $\bG$ defined over $\F_p$, we write $H = \bH(q)$
for the group of $\F_q$-rational points of $\bH$.  So $G = \bG(q)$
is a finite Chevalley group of type $D_4$ and $U = \bU(q)$ is a Sylow $p$-subgroup
of $G$.
We cite \cite[Sec.\ 3]{DM}, as a reference
for the theory of algebraic groups over finite fields, and for the
terminology used here.

We note that $G = \mathrm{Spin}^+_8(q)$ is not a simple group, but
that $G/Z(G) \cong P\Omega_8^+(q)$ is a simple group.
As we are mainly interested in $U$ in this paper, which is isomorphic
to its image in $G/Z(G)$, this distinction causes us no harm.

Let $\Phi$ be a root system of $\bG$ with respect to $\bT$ and let $\Phi^+$ be
the system of positive roots determined by $B$.
We write $\{\alpha_1,
\alpha_2, \alpha_3, \alpha_4\}$ for the corresponding base of simple roots ordered
so that the Dynkin diagram of $\Phi$ is
\begin{center}
\setlength{\unitlength}{1cm}
\begin{picture}(3.4,3)
\thinlines
\put(0.5,0.5){\circle*{0.17}}
\put(1.5,1.5){\circle*{0.17}}
\put(2.9,1.5){\circle*{0.17}}
\put(0.5,2.5){\circle*{0.17}}
\put(0.5,0.5){\line( 1, 1){1}}
\put(1.5,1.5){\line( 1, 0){1.4}}
\put(0.5,2.5){\line( 1, -1){1}}
\put(0.2,0.8){$\alpha_2$}
\put(1.4,1.1){$\alpha_3$}
\put(2.8,1.1){$\alpha_4$}
\put(0.2,2.1){$\alpha_1$}
\end{picture}
\end{center}
For $\alpha = \sum_{i=1}^4 a_i \alpha_i \in \Phi$ we write $\height(\alpha)
= \sum_{i=1}^4 a_i$ for the height of $\alpha$.
We use the notation
\dfour1121 for the
root $\alpha_1 + \alpha_2 + 2 \alpha_3 + \alpha_4 \in \Phi^+$ and we use a
similar notation for the remaining positive roots.
We enumerate the roots in $\Phi^+$ as shown in Table~\ref{tab:posroots} below.

\begin{table}[!ht]
\begin{center}
\begin{tabular}{c|llll}
\hline
Height & Roots &&& \\
\hline
1 & $\alpha_1$  & $\alpha_2$  & $\alpha_3$   & $\alpha_4$ \\
\hline
2 & $\alpha_5 := $ \dfour1010 & $\alpha_6 := $ \dfour0110 & $\alpha_7 := $ \dfour0011 & \\
\hline
3 & $\alpha_8 := $ \dfour1110 & $\alpha_9 := $ \dfour1011 & $\alpha_{10} := $ \dfour0111 & \\
\hline
4 & $\alpha_{11} := $ \dfour1111 &&& \\
\hline
5 & $\alpha_{12} := $ \dfour1121 &&& \\
\hline
\end{tabular}
\end{center}
\medskip
\caption{Positive roots of the root system $\Phi$ of type $D_4$.}
\label{tab:posroots}

\end{table}

The triality automorphism of the Dynkin diagram can be lifted to
an automorphism of $\bG$ that stabilizes $\bT$ and $\bB$,
and restricts to an automorphism of $G$.  We refer the reader
to \cite[Section~4.2]{Carter1} for more information and let $\tau$
denote this automorphism of $G$.  In a similar
way we can define three automorphisms of $G$ corresponding to the
involutory automorphisms of the Dynkin diagram given by
switching two of the outer vertices.  We denote these by
$\sigma_{12}$, $\sigma_{14}$ and $\sigma_{24}$ in the obvious
way.

The group $G$ is generated by the root elements $x_\alpha(t)$ for
$\alpha \in \Phi$ and $t \in \F_q$.  We may choose
these root elements so that, for $\alpha, \beta \in \Phi$ such that
$\alpha + \beta \in \Phi$, we have $[x_\alpha(t), x_\beta(u)] = x_{\alpha+\beta}(\pm tu)$
and the signs are invariant under $\tau$ and the $\sigma_{ij}$.
For positive roots, we use the
abbreviation $x_i(t) := x_{\alpha_i}(t)$ and let $X_i = \{x_i(t) \mid t \in \F_q\}$, for $i=1,2,\dots,12$.
 We have $\tau(x_i(t)) = x_{i\rho}(t)$ where $\rho$ is the permutation
$\rho = (1,4,2)(5,7,6)(8,9,10)$;
and have a similar description for the automorphisms $\sigma_{ij}$.
The commutators $[x_i(t), x_j(u)] = x_i(t)^{-1} x_j(u)^{-1} x_i(t)
x_j(u)$
are given in Table~\ref{tab:commrelD4}. All $[x_i(t),
x_j(u)]$ not listed in this table are equal to~$1$.  We refer the reader
to \cite[Chapter 4]{Carter1} for details on root elements and
commutator relations.
\begin{table}[!ht]
\begin{tabular}{llllll}
$\left[x_1(t), x_3(u)\right]$ & $=$ & $x_5(tu)$, \, &
$\left[x_1(t), x_6(u)\right]$ & $=$ & $x_8(tu)$, \\
$\left[x_1(t), x_7(u)\right]$ & $=$ & $x_9(tu)$, \, &
$\left[x_1(t), x_{10}(u)\right]$ & $=$ & $x_{11}(tu)$, \\
$\left[x_2(t), x_3(u)\right]$ & $=$ & $x_6(tu)$, \, &
$\left[x_2(t), x_5(u)\right]$ & $=$ & $x_8(tu)$,\\
$\left[x_2(t), x_7(u)\right]$ & $=$ & $x_{10}(tu)$, \, &
$\left[x_2(t), x_9(u)\right]$ & $=$ & $x_{11}(tu)$,\\
$\left[x_3(t), x_4(u)\right]$ & $=$ & $x_7(-tu)$, \, &
$\left[x_3(t), x_{11}(u)\right]$ & $=$ & $x_{12}(tu)$,\\
$\left[x_4(t), x_5(u)\right]$ & $=$ & $x_9(tu)$, \, &
$\left[x_4(t), x_6(u)\right]$ & $=$ & $x_{10}(tu)$,\\
$\left[x_4(t), x_8(u)\right]$ & $=$ & $x_{11}(tu)$, \, &
$\left[x_5(t), x_{10}(u)\right]$ & $=$ & $x_{12}(-tu)$,\\
$\left[x_6(t), x_9(u)\right]$ & $=$ & $x_{12}(-tu)$,\, &
$\left[x_7(t), x_8(u)\right]$ & $=$ & $x_{12}(-tu)$.
\end{tabular}
\medskip
\caption{Commutator relations for type $D_4$.} \label{tab:commrelD4}
\end{table}

We have that $U$ is the subgroup of $G$ generated by the
elements $x_i(t_i)$ for $i=1,2, \dots, 12$ and $t_i \in \F_q$.
In fact we have that $U = \prod_{i=1}^{12} X_i$.
It turns out that it is convenient for us to write our elements
of $U$ with an element from $X_3$ at the front, and we use the notation
\begin{equation} \label{e:x(t)}
x(\bt) = x_3(t_3) x_1(t_1) x_2(t_2) x_4(t_4) x_5(t_5) \cdots x_{12}(t_{12}),
\end{equation}
where $\bt = (t_1,\dots,t_{12})$.

There is a further piece of notation that we use frequently in the sequel,
we define the normal subgroups $M_i$ of $U$ for $i=1,\dots,13$ by
\begin{align} \label{e:Mi}
M_1 & = U,  \quad  \\
M_2 = X_1X_2\prod_{j=4}^{12} X_j, \nonumber \\
M_3 &= X_2\prod_{j=4}^{12} X_j \text{ and } \nonumber \\
M_i &= \prod_{j=i}^{12} X_j \quad \text{for $i=4,\dots,13$}.
\end{align}

\section{Conjugacy Classes and their Representatives}
\label{sec:classes}

In \cite{GoodwinRoehrle}, an algorithm for calculating the conjugacy
classes of Sylow $p$-subgroups of finite Chevalley groups in good
characteristic is described; so we assume $p > 2$ for the first part of this section.
This algorithm was implemented in GAP4
\cite{GAP4} and, in particular, used to determine the conjugacy
classes of $U = U(q)$. The conjugacy class representatives are given
as {\em minimal representatives} as defined in \cite{Goodwin} and
discussed below.

(For ease of exposition we temporarily let
$x_1 = x_{\alpha_3}$, $x_2 = x_{\alpha_1}$ and $x_3 = x_{\alpha_2}$
in the next paragraph.)

The idea behind the calculation of minimal representatives is to
recursively determine the orbits of $U$ on $U/M_{i+1}$ for $i =
1,\dots,12$; recall that $M_{i+1}$ is defined in \eqref{e:Mi}.
To explain this we consider the set
$A(x,i) = \{xx_i(t)M_{i+1} \mid t \in \F_q\} \subseteq U/M_{i+1}$
for $x \in U$. The
centralizer $C_U(xM_i)$ acts on $A(x,i)$ by conjugation. The key
results for the theory of minimal representatives are \cite[Lem.\ 5.1
and Prop.\ 6.2]{Goodwin}, which imply the following dichotomy:
\begin{itemize}
\item[(I)]  all elements of $A(x,i)$ are conjugate under $C_U(xM_i)$; or
\item[(R)] no two elements of $A(x,i)$ are conjugate under $C_U(xM_i)$.
\end{itemize}
We say that $i$ is an {\em inert point} of $x$ if (I) holds and $i$
is a {\em ramification point} of $x$ if (R) holds.  We say $x(\bt)$
as defined in \eqref{e:x(t)} is the {\em minimal representative} of
its conjugacy class if $t_i = 0$ whenever $i$ is an inert point of
$x(\bt)$.  Thanks to \cite[Prop.\ 5.4 and Prop.\ 6.2]{Goodwin}, the minimal
representatives of conjugacy classes give a complete set of representatives of the
conjugacy classes in $U$.

{\tiny

\begin{table}[!ht]
\begin{tabular}{l|l|l|l|l}
\hline \hline
Label & Representative & Conditions & Number & Centralizer \\
\hline
$\cC_{1,2,3,4}$ & $x_3(a_3)x_1(a_1)x_2(a_2)x_4(a_4)$ & & $(q-1)^4$ & $q^4$\\
\hline \hline
$\cC_{1,2,3}$ & $x_3(a_3)x_1(a_1)x_2(a_2)x_9(b_9)x_{10}(b_{10})$ & $b_9/a_1 + b_{10}/a_2 = 0$ & $(q-1)^3q$ & $q^5$\\
\hline
$\cC_{1,3,4}$ & $x_3(a_3)x_1(a_1)x_4(a_4)x_8(b_8)x_{10}(b_{10})$ & $b_8/a_1 + b_{10}/a_4 = 0$ & $(q-1)^3q$ & $q^5$\\
\hline
$\cC_{2,3,4}$ & $x_3(a_3)x_2(a_1)x_4(a_4)x_8(b_8)x_9(b_9)$ & $b_8/a_2 + b_9/a_3 = 0$ & $(q-1)^3q$ & $q^5$\\
\hline \hline
$\cC_{1,3}$ & $x_3(a_3)x_1(a_1)x_{10}(b_{10})$ & & $(q-1)^2q$ & $q^5$ \\
\hline
$\cC_{2,3}$ & $x_3(a_3)x_2(a_2)x_9(b_9)$ & & $(q-1)^2q$ & $q^5$ \\
\hline
$\cC_{3,4}$ & $x_3(a_3)x_4(a_4)x_8(b_8)$ & & $(q-1)^2q$ & $q^5$ \\
\hline \hline
$\cC_{1,2,4,q^6}$ ($p > 3$)& $x_1(a_1)x_2(a_2)x_4(a_4)x_5(c_5)x_6(c_6)x_7(c_7)$  &  $c_5/a_1 + c_6/a_2 + c_7/a_4 = 0$ & $(q-1)^3(q^2-1)$ & $q^6$ \\
$\cC_{1,2,4,q^6}$ ($p = 3$)& $x_1(a_1)x_2(a_2)x_4(a_4)x_6(c_6)x_7(c_7)$  &  & $(q-1)^3(q^2-1)$ & $q^6$ \\
$\cC_{1,2,4,q^7}$ & $x_1(a_1)x_2(a_2)x_4(a_4)x_{12}(b_{12})$ & & $(q-1)^3q$ & $q^7$ \\
\hline \hline
$\cC_{1,2,q^6}$ & $x_1(a_1)x_2(a_2)x_5(c_5)x_6(c_6)x_7(c_7)$ & $c_5/a_1 + c_6/a_2 = 0$ & $(q-1)^2(q^2-1)$ & $q^6$ \\
$\cC_{1,2,q^7}$ & $x_1(a_1)x_2(a_2)x_9(a_9)x_{10}(a_{10})$ & $a_9/a_1 + a_{10}/a_2 = 0$ & $(q-1)^3$ & $q^7$ \\
$\cC_{1,2,q^8}$ & $x_1(a_1)x_2(a_2)x_{12}(b_{12})$ & & $(q-1)^2q$ & $q^8$ \\
\hline
$\cC_{1,4,q^6}$ & $x_1(a_1)x_4(a_4)x_5(c_5)x_6(c_6)x_7(c_7)$ & $c_5/a_1 + c_7/a_4 = 0$ & $(q-1)^2(q^2-1)$ & $q^6$ \\
$\cC_{1,4,q^7}$ & $x_1(a_1)x_4(a_4)x_8(a_8)x_{10}(a_{10})$ & $a_8/a_1 + a_{10}/a_4 = 0$ & $(q-1)^3$ & $q^7$ \\
$\cC_{1,4,q^8}$ & $x_1(a_1)x_4(a_4)x_{12}(b_{12})$ & & $(q-1)^2q$ & $q^8$ \\
\hline
$\cC_{2,4,q^6}$ & $x_2(a_2)x_4(a_4)x_5(c_5)x_6(c_6)x_7(c_7)$ & $c_6/a_2 + c_7/a_4 = 0$ & $(q-1)^2(q^2-1)$ & $q^6$ \\
$\cC_{2,4,q^7}$ & $x_2(a_2)x_4(a_4)x_8(a_8)x_9(a_9)$ & $a_8/a_2 + a_9/a_4 = 0$ & $(q-1)^3$ & $q^7$ \\
$\cC_{2,4,q^8}$ & $x_2(a_2)x_4(a_2)x_{12}(b_{12})$ & & $(q-1)^2q$ & $q^8$ \\
\hline \hline
$\cC_{1,q^6}$ & $x_1(a_1)x_6(c_6)x_7(c_7)$ & & $(q-1)(q^2-1)$ & $q^6$ \\
$\cC_{1,q^7}$ & $x_1(a_1)x_{10}(a_{10})$ & & $(q-1)^2$ & $q^7$ \\
$\cC_{1,q^8}$ & $x_1(a_1)x_{12}(b_{12})$ & & $(q-1)q$ & $q^8$ \\
\hline
$\cC_{2,q^6}$ & $x_2(a_2)x_5(c_5)x_7(c_7)$ & & $(q-1)(q^2-1)$ & $q^6$ \\
$\cC_{2,q^7}$ & $x_2(a_2)x_9(a_9)$ & & $(q-1)^2$ & $q^7$ \\
$\cC_{2,q^8}$ & $x_2(a_2)x_{12}(b_{12})$ & & $(q-1)q$ & $q^8$ \\
\hline
$\cC_{4,q^6}$ & $x_4(a_4)x_5(c_5)x_6(c_6)$ & & $(q-1)(q^2-1)$ & $q^6$ \\
$\cC_{4,q^7}$ & $x_4(a_4)x_8(a_8)$ & & $(q-1)^2$ & $q^7$ \\
$\cC_{4,q^8}$ & $x_4(a_4)x_{12}(b_{12})$ & & $(q-1)q$ & $q^8$ \\
\hline \hline
$\cC_3$ & $x_3(a_3)x_8(b_8)x_9(b_9)x_{10}(b_{10})x_{11}(b_{11})$ & & $(q-1)q^4$ & $q^8$ \\
\hline \hline
$\cC_{5,6,7}$ & $x_5(a_5)x_6(a_6)x_7(a_7)x_{11}(b_{11})$ & & $(q-1)^3q$ & $q^8$ \\
\hline \hline
$\cC_{5,6,q^8}$ & $x_5(a_5)x_6(a_6)x_9(a_9)x_{10}(a_{10})$ & $a_9/a_5 + a_{10}/a_6 = 0$ & $(q-1)^3$ & $q^8$ \\
$\cC_{5,6,q^9}$ & $x_5(a_5)x_6(a_6)x_{11}(b_{11})$ & & $(q-1)^2q$ & $q^9$ \\
\hline
$\cC_{5,7,q^8}$ & $x_5(a_5)x_7(a_7)x_8(a_8)x_{10}(a_{10})$ & $a_8/a_5 + a_{10}/a_7 = 0$ & $(q-1)^3$ & $q^8$ \\
$\cC_{5,7,q^9}$ & $x_5(a_5)x_7(a_7)x_{11}(b_{11})$ & & $(q-1)^2q$ & $q^9$ \\
\hline
$\cC_{6,7,q^8}$ & $x_6(a_6)x_7(a_7)x_8(a_8)x_9(a_9)$ & $a_8/a_6 + a_9/a_7 = 0$ & $(q-1)^3$ & $q^8$ \\
$\cC_{6,7,q^9}$ & $x_6(a_6)x_7(a_7)x_{11}(b_{11})$ & & $(q-1)^2q$ & $q^9$ \\
\hline \hline
$\cC_{5,q^8}$ & $x_5(a_5)x_{10}(a_{10})$ & & $(q-1)^2$ & $q^8$ \\
$\cC_{5,q^9}$ & $x_5(a_5)x_{11}(b_{11})$ & & $(q-1)q$ & $q^9$ \\
\hline
$\cC_{6,q^8}$ & $x_6(a_6)x_9(a_9)$ & & $(q-1)^2$ & $q^8$ \\
$\cC_{6,q^9}$ & $x_6(a_6)x_{11}(b_{11})$ & & $(q-1)q$ & $q^9$ \\
\hline
$\cC_{7,q^8}$ & $x_7(a_7)x_8(a_8)$ & & $(q-1)^2$ & $q^8$ \\
$\cC_{7,q^9}$ & $x_7(a_7)x_{11}(b_{11})$ & & $(q-1)q$ & $q^9$ \\
 \hline \hline
 $\cC_{8,9,10}$ & $x_8(c_8)x_9(c_9)x_{10}(c_{10})$ & & $q^3-1$ & $q^{10}$ \\
\hline \hline
$\cC_{11}$ & $x_{11}(a_{11})$ & & $q-1$ & $q^{11}$ \\
\hline \hline
$\cC_{12}$ & $x_{12}(b_{12})$ & & $q$ & $q^{12}$ \\
\hline \hline
\end{tabular}
\medskip
\caption{Representatives of conjugacy classes in $U(q)$ for $p > 2$}
\label{tab:representatives}
\end{table}
}

In Table \ref{tab:representatives}, we present the minimal representatives
of the conjugacy classes in $U(q)$.
Starting with the minimal representatives calculated as in
\cite{GoodwinRoehrle}, we have parameterized the conjugacy
classes giving representatives partitioned into families.
This has been done to reduce the number of families required
to what appears to be an optimal number.

The first column of Table \ref{tab:representatives} gives a label for the
family, then in the second column a parametrization of the elements
of the family is given, where the notational convention is that an $a_i$ is an element of
$\F_q^\times = \F_q \setminus \{0\}$, a $b_i$ is any element of $\F_q$, and for
a sequence of the form $c_{i_1},\dots,c_{i_r}$ the $c_{i_j}$ are
elements of $\F_q$, which are not all zero.  These parameters have
to satisfy the conditions given in the third column.  In
the fourth column we give the number of elements in the family.  The
families are constructed so that the size of the centralizer of an
element in the family does not depend on the choice, and the final
column gives this centralizer size.

The families of
representatives have been chosen, so that they are permuted by the automorphisms $\tau$,
$\sigma_{12}$, $\sigma_{14}$ and $\sigma_{24}$.  Thus we can read off the
action of these automorphisms on the conjugacy classes.  However,
for the family, $\cC_{1,2,4,q^6}$, it is not possible for $p=3$ to get this stability
under the triality automorphism and we have to distinguish the cases
$p = 3$ and $p > 3$;
we note that the family given for $p = 3$
is actually also valid for $p > 3$.

To end this section we explain the differences in the parametrization of the conjugacy classes
for $p=2$, where the theory of minimal representatives fails.  First we note that there are a number of families, where there is a relation between
parameters involving a sum of two terms and this has to be replaced by just setting one of the
parameters equal to zero.  For example in $\cC_{1,2,3}$, we have to replace the representatives simply with
$x_3(a_3)x_1(a_1)x_2(a_2)x_{10}(b_{10})$ and there are no conditions.  This does
lead to the families of representatives not being stable under the automorphisms,
but this cannot be helped.

The more significant differences are as follows.
\begin{itemize}
\item The family $\cC_{1,2,3,4}$ is replaced by the family
$\cC_{1,2,3,4}^{p=2}$.  The representatives are of the form
$x_3(a_3)x_1(a_1)x_2(a_2)x_4(a_4)x_{10}(d_{10})$,
where $d_{10}$ is either 0, or an element of $\F_q$ not in the image of the
map $t \mapsto a_3a_2a_4(t^2+t)$.  So there are $2(q-1)^4$ representatives
in this family.  Their centralizers have order $2q^4$.
\item The family $\cC_{1,2,4,q^6}$ should be included as in the $p=3$ case in the table above.
\item The family $\cC_{1,2,4,q^7}$ is replaced by two families.
\begin{itemize}
  \item $\cC_{1,2,4,2q^7}^{p=2}$. The representatives are of the form
  $
  x_1(a_1)x_2(a_2)x_4(a_4)x_{10}(a_{10})x_{12}(d_{12}),
  $
  where $d_{12}$ is either 0, or an element of $\F_q$ not in the image of the
  map $t \mapsto a_1a_2a_4t^2+a_1a_{10}t$.  So there are $2(q-1)^4$ representatives
  in this family.  Their centralizers have order $2q^7$.
  \item $\cC_{1,2,4,q^7}^{p=2}$.  The representatives are of the form
  $
  x_1(a_1)x_2(a_2)x_4(a_4).
  $
  There are $(q-1)^3$ representatives
  in this family and their centralizers have order $q^7$.
\end{itemize}
\item The family $\cC_{5,6,7}$ is replaced by two new families namely.
\begin{itemize}
\item $\cC_{5,6,7,2q^8}^{p=2}$. The representatives are of the form
$x_5(a_5)x_6(a_6)x_7(a_7)x_{10}(a_{10})x_{11}(d_{11})$,
where $d_{11}$ is either 0, or an element of $\F_q$ not in the image of the
map $t \mapsto a_5a_6a_7t^2+a_5a_{10}t$.
There are $2(q-1)^4$ representatives
in this family and their centralizers have order $2q^8$.
\item $\cC_{5,6,7,q^8}^{p=2}$. The representatives are of the form
$x_5(a_5)x_6(a_6)x_7(a_7)$.  The number of representatives in this family is $(q-1)^3$
and their centralizers have order $q^8$.
\end{itemize}
\end{itemize}

\section{The irreducible characters of \texorpdfstring{$U$}{U}}
\label{sec:chars}

In \cite{HLM}, the irreducible characters of $U$ were determined giving a partition
into families, which are given in Table \ref{tab:irrU} for the case $p >2$.
We have modified these constructions to make them more explicit, so that
we can realize them all as induced characters from linear characters
of certain subgroups of $U$.  In the next section we
calculate the values of the characters,
and in doing so we explain their construction.

\begin{table}[!ht]

\begin{center}
\renewcommand{\arraystretch}{1.5}
\begin{tabular}{l|l|l|l|l}
\hline
\hline
Family & Notation & Conditions & Number & Degree
\\
\hline
\hline
$\mathcal{F}_{12}$ & $\chi_{12}^{a_{12},b_1,b_2,b_4}$ & & $q^3(q-1)$ & $q^4$
\\
\hline
$\mathcal{F}_{11}$ & $\chi_{11}^{a_{11},b_5,b_6,b_7,b_3}$  & & $q^4(q-1)$ & $q^3$
\\
\hline
\hline
$\mathcal{F}_{8,9,10}$  & $\chi_{8,9,10}^{a_8,a_9,a_{10},b_3}$  & & $q(q-1)^3$ & $q^3$
\\
\hline
\hline
$\mathcal{F}_{8,9}$ & $\chi_{8,9,q^3}^{a_8,a_9,a_6,a_7}$  & $a_6/a_8+a_7/a_9=0$ & $(q-1)^3$ & $q^3$\\
& $\chi_{8,9,q^2}^{a_8,a_9,b_2,b_3,b_4}$ & $b_2/a_8+b_4/a_9=0$ & $q^2(q-1)^2$ & $q^2$
\\
\hline
$\mathcal{F}_{8,10}$ & $\chi_{8,10,q^3}^{a_8,a_{10},a_5,a_7}$ & $a_5/a_8+a_7/a_{10}=0$ & $(q-1)^3$ & $q^3$\\
& $\chi_{8,10,q^2}^{a_8,a_{10},b_1,b_3,b_4}$ & $b_1/a_8+b_4/a_{10}=0$ & $q^2(q-1)^2$ & $q^2$
\\
\hline
$\mathcal{F}_{9,10}$ & $\chi_{9,10,q^3}^{a_9,a_{10},a_5,a_6}$ & $a_5/a_9+a_6/a_{10}=0$ & $(q-1)^3$ & $q^3$\\
& $\chi_{9,10,q^2}^{a_9,a_{10},b_1,b_2,b_3}$ & $b_1/a_9+b_2/a_{10}=0$ & $q^2(q-1)^2$ & $q^2$
\\
\hline \hline
$\mathcal{F}_{8}$ & $\chi_{8,q^3}^{a_8,a_7}$ & & $(q-1)^2$ & $q^3$\\
& $\chi_{8,q^2}^{a_8,b_3,b_4}$ & & $q^2(q-1)$ & $q^2$
\\
\hline
$\mathcal{F}_{9}$ & $\chi_{9,q^3}^{a_9,a_6}$ & & $(q-1)^2$ & $q^3$\\
& $\chi_{9,q^2}^{a_9,b_2,b_3}$ & & $q^2(q-1)$ & $q^2$
\\
\hline
$\mathcal{F}_{10}$ & $\chi_{10,q^3}^{a_{10},a_5}$ & & $(q-1)^2$ & $q^3$\\
& $\chi_{10,q^2}^{a_{10},b_1,b_2}$  & & $q^2(q-1)$ &  $q^2$
\\
\hline
\hline
$\mathcal{F}_{5,6,7}$ ($p>3$) & $\chi_{5,6,7}^{a_5,a_6,a_7,b_1,b_2,b_4}$ & $b_1/a_5+b_2/a_6+b_4/a_7=0$ & $q^2(q-1)^3$ & $q$ \\
$\mathcal{F}_{5,6,7}$ ($p=3$) & $\chi_{5,6,7}^{a_5,a_6,a_7,b_2,b_4}$ &  & $q^2(q-1)^3$ & $q$
\\
\hline \hline
$\mathcal{F}_{5,6}$ & $\chi_{5,6}^{a_5,a_6,b_1,b_2,b_4}$ & $b_1/a_5+b_2/a_6=0$ & $q^2(q-1)^2$ & $q$
\\
\hline
$\mathcal{F}_{5,7}$ & $\chi_{5,7}^{a_5,a_7,b_1,b_2,b_4}$ & $b_1/a_5+b_4/a_7=0$ & $q^2(q-1)^2$ & $q$
\\
\hline
$\mathcal{F}_{6,7}$ & $\chi_{6,7}^{a_6,a_7,b_1,b_2,b_4}$ & $b_2/a_6+b_4/a_7=0$ & $q^2(q-1)^2$ & $q$
\\
\hline \hline
$\mathcal{F}_{5}$ & $\chi_{5}^{a_5,b_2,b_4}$ & & $q^2(q-1)$ & $q$
\\
\hline
$\mathcal{F}_{6}$ & $\chi_{6}^{a_6,b_1,b_4}$ & & $q^2(q-1)$ & $q$
\\
\hline
$\mathcal{F}_{7}$ & $\chi_{7}^{a_7,b_1,b_2}$ & & $q^2(q-1)$ & $q$
\\
\hline \hline
$\mathcal{F}_{\lin}$ &
$\chi_{\lin}^{b_1,b_2,b_3,b_4}$ & & $q^4$ & $1$
\\
\hline \hline
\end{tabular}
\end{center}
\medskip
\caption{The irreducible characters of $U$ for $p>2$.} \label{tab:irrU}
\end{table}

We explain the contents of Table \ref{tab:irrU}.  The first column
gives the name of the family and the second column gives the
names of the characters in that family.  The notational convention
is that parameters denoted by $a_i$ range over $\F_q^\times = \F_q \setminus \{0\}$, and
parameters denoted by $b_i$ range over $\F_q$.   The subscripts in
the notation give an idea of ``where these parameters act'', as
can be understood from the descriptions of how the characters are constructed
given in the next section. For some families there is a relation between some of the parameters
and this is given in the third column.
The fourth column records
the number of characters in that family.   The families are constructed
in such a way that the degree of the characters in a family is constant,
and the fifth column gives this degree.

The triality automorphism $\tau$ of $G$, permutes the families of
characters by acting on the subscripts of the families
by the permutation $\rho$; similarly the automorphisms
$\sigma_{12}$, $\sigma_{14}$ and $\sigma_{24}$ act on the families.
As mentioned earlier, we have updated the parametrization from
\cite[\S4]{HLM}, so that we can read off the action
of these automorphisms on the characters.
Though it is not possible to do this for the family $\cF_{5,6,7}$ for $p = 3$.

We end this section by noting that for $p=2$, the parametrization of irreducible characters is slightly different.
As explained in \cite{HLM}, the family denoted $\cF_{8,9,10}$  is more
complicated and has to be replaced by
the family $\cF_{8,9,10}^{p=2}$, which consists of two types of characters as explained below and in more detail
at the end of \S\ref{ss:8910}.
\begin{itemize}
\item The characters $\chi_{8,9,10,q^3}^{a_8,a_9,a_{10}}$.  There are $(q-1)^3$ of these and they have degree $q^3$.
\item The characters $\chi_{8,9,10,\frac{q^3}{2}}^{a_8,a_9,a_{10},a_{5,6,7},d_{1,2,4},d_3}$.  There are
$4(q-1)^4$ of these and their degree is $\frac{q^3}{2}$.  The parameters $d_{1,2,4}$ and $d_3$ can each take one of two values as explained at the end of  \S\ref{ss:8910}.
\end{itemize}

Also for $p=2$ the parametrization of characters where there is a relation between
parameters involving the sum
of two terms  has to be updated in a similar way to the corresponding situation
for conjugacy class representatives.  For example, in the family $\cF_{5,6}$, we just have
characters $\chi_{5,6}^{a_5,a_6,b_2}$ (so we have set $b_1 = 0$ and $b_2$ ranges over $\F_q$).

\section{Determining the character values}
\label{sec:main}

In the following subsections we give the construction of
the irreducible characters and determine the character values.
We split up our determination of the character values according to the families given in Table \ref{tab:irrU},
and in each of these subsections we calculate the values of the irreducible characters
on elements of $U$.  For the families considered in \S\ref{ss:lin}, \S\ref{ss:567} and \S\ref{ss:89}
we determine the character values on general elements of $U$, whereas
for the other characters we take advantage of the representatives of the conjugacy classes
from Table \ref{tab:representatives}.

Before we embark on calculating the values of the irreducible
characters of $U$, we give some notation that we use.

Denote by $\Tr : \F_q \to \F_p$ the trace map, and define
$\phi : \F_q \to \C^\times$
by $\phi(x) = e^{\frac{i2\pi \Tr(x)}{p}}$, so that $\phi$ is a nontrivial character
from the additive group of $\F_q$ to the multiplicative group $\C^\times$.

We record two important elementary observations about $\phi$ that we require frequently in the sequel.
First we note that, for $t \in \F_q$, we have $\phi(st)=1$ for all $s \in \F_q$ if and only $t=0$,
second we note that
$\sum_{s \in \F_q} \phi(s) = 0$.

Let $H$ be a finite group.  We write $Z(H)$ for the centre of $H$.
Let $K$ be a subgroup of $H$, let $x \in H$, $h \in H$ and let $\psi : K \to \C$.  We write
$\dot \psi : H \to \C$ for the function defined by $\dot \psi(x) = \psi(x)$ if $x \in K$ and $\dot \psi(x) = 0$ if $x \notin K$.
We denote conjugation by $x^h = h^{-1}xh$ and ${}^h\psi : H \to \C$
is defined by ${}^h\psi(x) = \dot \psi(x^h)$.  If $\psi$ is a class function on $K$, then
we write $\psi^H$ for the induced class function on $H$, which is given by
the standard formula $\psi^H(h) = \frac{1}{|K|} \sum_{x \in H} \dot \psi(x^h)$.

We use the notation $x(\bt)$ from \eqref{e:x(t)}, where we recall that $x_3(t_3)$ is on the left.
Frequently, we consider quotients of $U$ by the normal subgroups $M_i$ defined in \eqref{e:Mi}.
When we do this we often want to identify the root subgroup $X_j$ with its image in $U/M_i$ for $j < i$, and
we will do this without further explanation.  We consider subgroups $V$ of $U/M_i$ of the form
$\prod_{j \in I} X_j$, where $I$ is a subset of $\{1,\dots,i\}$.  For such a subgroup $V$
we use the notation $x_V(\bt) = \prod_{j \in I} x_j(t_j)$ to denote a general element of $V$,
where the factors are ordered as in $x(\bt)$, so that $x_3(t_3)$ is on the left (if $3 \in I$).

We often use the notation $\delta_{\bs,0}$, where $\bs \in \F_q^m$ for some $m$, which
is defined as usual by $\delta_{\bs,0} = 1$ if $\bs = 0$ and $\delta_{\bs,0} = 0$ if $\bs \ne 0$.

\smallskip

As mentioned earlier, we have adapted the construction of the characters from \cite{HLM}
so that they are more explicit.  In particular, we realise each of the irreducible
characters as an induced character of a certain subgroup, which leads to a more
precise parametrization.  Given these differences, we briefly explain why our
constructions line up with those in \cite{HLM}.

For each irreducible character $\chi$ that we construct, we give a subgroup $V$ of a quotient $U/M$ of $U$ and a linear
character $\lambda$ of $V$.  Then $\chi$ is equal to the induced character $\lambda^{U/M}$ inflated to $U$ possibly tensored with a linear
character of $U$.
In each case there is a subgroup $X$ of $U/M$, which is a
transversal of $V$ in $U/M$.  In each of subsequent subsections, where we calculate the characters
values, we state what $V$ and $X$ are.  We also justify that the induced characters that we obtain
are indeed irreducible, and that the characters as given in Table \ref{tab:irrU} are distinct.
The main tool for achieving this is the Clifford theory for characters of special groups of type $q^{1+2m}$,
as is also the case in \cite{HLM}.
Now we can count the number of characters that we have constructed in each family, and verify
that this gives the same numbers in \cite{HLM}.  Therefore, we can deduce that we have
covered all of the irreducible characters of $U$.

\subsection{\texorpdfstring{The family $\mathcal{F}_{\lin}$}{The family F(lin)}} \label{ss:lin}

Here we state the value of $\chi_{\lin}^{b_1,b_2,b_3,b_4}$ on all elements of $U$.
We note that $M_5$ is the commutator subgroup of $U$, and the characters
$\chi_{\lin}^{b_1,b_2,b_3,b_4}$ are precisely the characters of $U/M_5$ inflated
to $U$; they are all the linear characters of $U$.  The following formula, therefore,
defines these characters.
$$
\chi_{\lin}^{b_1,b_2,b_3,b_4}(x(\bt)) =
\phi(b_1 t_1 + b_2t_2 + b_3t_3 + b_4t_4).
$$

\subsection{\texorpdfstring{The family $\cF_{5,6,7}^*$}{The family F(5,6,7)*}} \label{ss:567}
The characters in the families $\cF_5$, $\cF_6$, $\cF_7$, $\cF_{5,6}$, $\cF_{5,7}$, $\cF_{6,7}$ and $\cF_{5,6,7}$
can all be constructed in essentially the same way.
Thus we combine these families and denote their union by $\cF_{5,6,7}^*$.

We begin by giving the construction of the characters $\chi_{5,6,7}^{c_5,c_6,c_7,b_1,b_2,b_4}$, where
$c_5,c_6,c_7 \in \F_q$ are not all zero and $b_1,b_2,b_4 \in \F_q$.  From these
characters we obtain all of the characters in $\cF_{5,6,7}^*$.
First we note that $M_8$ is in the kernel of $\chi_{5,6,7}^{c_5,c_6,c_7,b_1,b_2,b_4}$.
As explained at the start of the section we identify $X_i$ with its image in $U/M_8$
for $i = 1,2,\dots,7$.

We have that $U/M_8 = X_3X_1X_2X_4X_5X_6X_7$, and that
$V=X_1X_2X_4X_5X_6X_7 \cong \F_q^6$ is an elementary abelian subgroup of $U/M_8$ of order $q^6$, and $X = X_3$ is a transversal of $U$ in $U/M_8$.
We define the linear character $\lambda^{c_5,c_6,c_7}$ of $V$  by $\lambda^{c_5,c_6,c_7}(x_V(\bt)) = \phi(c_5 t_5+c_6t_6+c_7t_7)$.

Let $\chi_{5,6,7}^{c_5,c_6,c_7}$ be the character of $U/M_8$ we obtain by inducing $\lambda^{c_5,c_6,c_7}$;
by a mild abuse of notation we also write $\chi_{5,6,7}^{c_5,c_6,c_7}$ for the character of $U$ given by inflation.
The character $\chi_{5,6,7}^{c_5,c_6,c_7,b_1,b_2,b_4}$ is defined by tensoring $\chi_{5,6,7}^{c_5,c_6,c_7}$ with the linear
character $\chi_{\lin}^{b_1,b_2,0,b_4}$.

The following proposition gives the character values of $\chi_{5,6,7}^{c_5,c_6,c_7,b_1,b_2,b_4}$.
It is clear from the calculations in the proof below that ${^x}\lambda^{c_5,c_6,c_7} \ne \lambda^{c_5,c_6,c_7}$ for all $x \in X$ with $x \ne 1$, so that
$\chi^{c_5,c_6,c_7,b_1,b_2,b_4}$ is irreducible by an application of Clifford theory.

\begin{proposition} \label{P:567}
$$\chi_{5,6,7}^{c_5,c_6,c_7,b_1,b_2,b_4}(x(\bt)) =  q \delta_{(t_3,c_5t_1+c_6t_2+c_7t_4),0} \phi(b_1t_1+b_2t_2+b_4t_4+c_5t_5+c_6t_6+c_7t_7)).$$
\end{proposition}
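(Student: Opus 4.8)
The plan is to compute the induced character $\chi_{5,6,7}^{c_5,c_6,c_7}$ on $U/M_8$ directly from the induction formula, using the transversal $X = X_3$, and then multiply by the linear character $\chi_{\lin}^{b_1,b_2,0,b_4}$ at the end. Write $g = x(\bt)$ and work modulo $M_8$, so that the only relevant coordinates are $t_1,\dots,t_7$ (and $t_3$, which sits at the front). Since $V = X_1X_2X_4X_5X_6X_7$ has index $q$ in $U/M_8$ with transversal $X_3$, the induced character is $\chi_{5,6,7}^{c_5,c_6,c_7}(g) = \sum_{s \in \F_q} \dot\lambda^{c_5,c_6,c_7}\big(x_3(s)^{-1} g\, x_3(s)\big)$, where $\dot\lambda$ is the extension-by-zero of $\lambda^{c_5,c_6,c_7}$ to $U/M_8$.

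First I would determine when the conjugate $x_3(s)^{-1} x(\bt) x_3(s)$ lies in $V = X_1X_2X_4X_5X_6X_7$, i.e.\ has trivial $X_3$-component. Conjugating by $x_3(s)$ only affects things via the commutator relations in Table~\ref{tab:commrelD4} involving $\alpha_3$: namely $[x_1,x_3] = x_5$, $[x_2,x_3]=x_6$, $[x_3,x_4]=x_7^{-1}$ (and $[x_3,x_{11}]=x_{12}$, which dies modulo $M_8$). So conjugating by $x_3(s)$ fixes the $X_3$-component $t_3$ of $g$; hence the conjugate lies in $V$ only if $t_3 = 0$, and otherwise every term in the sum is zero, giving the factor $\delta_{t_3,0}$. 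When $t_3 = 0$, I would then track how conjugation by $x_3(s)$ modifies the coordinates $t_5,t_6,t_7$: collecting the commutators shows $x_3(s)^{-1} x(\bt) x_3(s)$ has $X_5$-component $t_5 + (\text{sign})\, s t_1$, $X_6$-component $t_6 + (\text{sign})\, s t_2$, and $X_7$-component $t_7 + (\text{sign})\, s t_4$, with the other components $t_1,t_2,t_4$ unchanged (and the signs being exactly those of the commutator relations, which are irrelevant because they get absorbed into the sign of $s$ as it ranges over $\F_q$). Applying $\lambda^{c_5,c_6,c_7}$ and summing over $s$ gives $\phi(c_5t_5+c_6t_6+c_7t_7)\sum_{s \in \F_q} \phi\big(\pm s(c_5 t_1 + c_6 t_2 + c_7 t_4)\big)$, which by the two elementary observations about $\phi$ equals $q\,\phi(c_5t_5+c_6t_6+c_7t_7)$ if $c_5t_1+c_6t_2+c_7t_4 = 0$ and $0$ otherwise. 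This yields the stated formula for $\chi_{5,6,7}^{c_5,c_6,c_7}$, and tensoring with $\chi_{\lin}^{b_1,b_2,0,b_4}$ (whose value on $x(\bt)$ is $\phi(b_1t_1+b_2t_2+b_4t_4)$, by \S\ref{ss:lin}) produces the full expression in the proposition.

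The only real care needed is bookkeeping with the ordering in $x(\bt)$ and the signs: one must check that conjugating $x_3(s_3)x_1(t_1)x_2(t_2)x_4(t_4)x_5(t_5)\cdots$ by $x_3(s)$ really does leave $t_1,t_2,t_4$ untouched and shifts only $t_5,t_6,t_7$ linearly in $s$, with no cross-terms or higher-order contributions — this follows because $X_5,X_6,X_7$ are central in $U/M_8$ (they lie in $M_5$, and everything they would further commute with lands in $M_8$), so the commutators produced are genuinely additive. I do not expect a genuine obstacle here; the main point is simply to organize the commutator computation cleanly. Finally, the irreducibility claim is already disposed of in the paragraph preceding the proposition (the orbit of $\lambda^{c_5,c_6,c_7}$ under $X = X_3$ is free, as is visible from the $s$-dependence just computed, so Clifford theory applies), and distinctness of the characters across the family follows by reading off the parameters $c_5,c_6,c_7$ from the $\phi$-argument and $b_1,b_2,b_4$ from the tensoring factor.
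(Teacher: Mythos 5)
Your proposal is correct and follows essentially the same route as the paper: induct $\lambda^{c_5,c_6,c_7}$ over the transversal $X_3$, observe that the $X_3$-coordinate is preserved by conjugation (giving $\delta_{t_3,0}$), reduce to the sum $\sum_{s}\phi\bigl(s(c_5t_1+c_6t_2+c_7t_4)\bigr)$ via the commutator relations, and tensor with $\chi_{\lin}^{b_1,b_2,0,b_4}$. One small caution: your remark that the commutator signs are irrelevant because they can be absorbed into $s$ is only valid because the three relevant commutators $[x_1,x_3]$, $[x_2,x_3]$, $[x_4,x_3]$ all carry the \emph{same} sign (here all $+1$); if they differed, the linear condition in the delta would change, so this does need to be checked rather than waved away.
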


\begin{proof}
We write $\lambda$ for $\lambda^{c_5,c_6,c_7}$ and $\chi$ for $\chi_{5,6,7}^{c_5,c_6,c_7}$.
We have that $X = X_3$ is a transversal of $V$ in $U$, and also that
$Z(U/M_8) = X_5X_6X_7$.  From the induction formula we easily obtain that
$\chi(x(\bt))=0$ if $t_3 \neq 0$, and also that $\chi(x(\bt)) = \delta_{t_3,0} \chi(x_1(t_1)x_2(t_2)x_4(t_4))\lambda(x_5(t_5)x_6(t_6)x_7(t_7))$.
Thus it suffices to calculate $\chi(x_1(t_1)x_2(t_2)x_4(t_4))$.
We have
\begin{align*}
\chi(x_1(t_1)x_2(t_2)x_4(t_4)) &= \sum_{s \in \F_q} {}^{x_3(s)}\lambda(x_1(t_1)x_2(t_2)x_4(t_4)) \\
&= \sum_{s \in \F_q} \lambda((x_1(t_1)x_2(t_2)x_4(t_4))^{x_3(s)}) \\
&= \sum_{s \in \F_q} \lambda([x_1(t_1),x_3(s)][x_2(t_2),x_3(s)][x_4(t_4),x_3(s)]) \\
&= \sum_{s \in \F_q} \lambda(x_5(st_1)x_6(st_2)x_7(st_4)) \\
&= \sum_{s \in \F_q} \phi(s(c_5t_1+c_6t_2+c_7t_4)) \\
&= q \delta_{c_5t_1+c_6t_2+c_7t_4,0}.
\end{align*}
From this we can deduce the proposition.
\end{proof}

We note that the characters $\chi_{5,6,7}^{c_5,c_6,c_7,b_1,b_2,b_4}$ can be equal for different
values of $b_i$.  A fairly easy calculation shows that $\chi_{5,6,7}^{c_5,c_6,c_7,b_1,b_2,b_4} =
\chi_{5,6,7}^{c_5,c_6,c_7,b'_1,b'_2,b'_4}$ if and only if $(b_1-b'_1,b_2-b'_2,b_4-b'_4)$ is a
multiple of $(c_5,c_6,c_7)$.  Thus we choose representatives as follows for $p \ne 2,3$.
\begin{itemize}
\item If $c_5 \ne 0$ and $c_6,c_7 = 0$, then we take $b_1 = 0$.
\item If $c_5,c_6 \ne 0$ and $c_7 = 0$, then we take $b_1,b_2$ such that $c_6b_1+c_5b_2 = 0$.
\item If $c_5,c_6,c_7 \ne 0$, then we take $b_1,b_2,b_4$ such that $c_6c_7b_1+c_5c_7b_2+c_5c_6b_4 = 0$.
\end{itemize}
We deal with the other cases symmetrically with respect to the triality.
This gives the characters as in Table \ref{tab:irrU}, and we can give their character values from
Proposition \ref{P:567}.

We note that for $p=2$ or $p=3$ we have to choose our representatives
slightly differently.  For $p=3$ the only difference is when $c_5,c_6,c_7 \ne 0$
and this is shown in Table \ref{tab:irrU}.  In the case $p =2$, and we have $c_5,c_6 \ne 0$ and $c_7 = 0$,
then we take $b_1=0$, and we deal with the other cases symmetrically with respect to the triality.

\subsection{The Families \texorpdfstring{$\cF_8$}{F8}, \texorpdfstring{$\cF_9$}{F9}, \texorpdfstring{$\cF_{10}$}{F10},
\texorpdfstring{$\cF_{8,9}$}{F8,9}, \texorpdfstring{$\cF_{8,10}$}{F8,10} and \texorpdfstring{$\cF_{9,10}$}{F9,10}} \label{ss:89}
The characters in the families $\cF_8$, $\cF_9$, $\cF_{10}$, $\cF_{8,9}$, $\cF_{8,10}$
and $\cF_{9,10}$ are constructed in a similar way, so we deal with them
together.  We include the details for the characters in the family
$\cF_{8,9}$ and then remark that the calculations for the family
$\cF_8$ are entirely similar.
The character in the families $\cF_9$, $\cF_{10}$, $\cF_{8,10}$ and $\cF_{9,10}$
are defined from characters in $\cF_8$ or $\cF_{8,9}$ through the triality $\tau$.
Thus the values of these characters can be immediately deduced.

From Table \ref{tab:irrU} we see that the characters in $\cF_{8,9}$ fall into two
subfamilies determined by degree.  We start with the characters of degree $q^3$ and explain the construction
of the characters in this family.

First we note that $M_{10}$ lies in the kernel of the characters in $\cF_{8,9}$.
We have that $U/M_{10} = \prod_{i = 1}^9 X_i$ and we define the normal subgroup
$$
V = \prod_{i \ne 1,2,4} X_i
$$
of $U/M_{10}$.
We have that $V$ is isomorphic
to $\F_q^6$ and so is elementary abelian, and $X = X_1X_2X_4$ is a transversal of $V$ in $U/M_{10}$.  Therefore, we have linear
characters $\lambda^{a_8,a_9,a_6,a_7}$ of $V$ defined by
$$
\lambda^{a_8,a_9,a_6,a_7}(x_V(\bt))
= \phi(a_6t_6 + a_7 t_7 + a_8 t_8 + a_9 t_9),
$$
where $a_8,a_9,a_6,a_7 \in \F_q^\times$ and we assume that $(a_6,a_7)$ is not a multiple
of $(a_8,a_9)$.
Then we define $\chi_{8,9, q^3}^{a_8,a_9,a_6,a_7}$ to be the character we obtain by inducing  $\lambda^{a_8,a_9,a_6,a_7}$ to $U/M_{10}$.

From the calculations of the character values in Proposition \ref{P:89} below, we observe that
${^x}\lambda^{a_8,a_9,a_6,a_7} \ne \lambda^{a_8,a_9,a_6,a_7}$ for all $x \in X$ with $x \ne 1$..
Therefore, an application of Clifford theory implies that each $\chi_{8,9, q^3}^{a_8,a_9,a_6,a_7}$
is irreducible.

We proceed to consider the characters in $\cF_{8,9}$ of degree $q^2$.
Again we have that $M_{10}$ is
in the kernel of these characters.  We consider the subgroup
$$
W = \prod_{i \ne 1,5} X_i
$$
of $U/M_{10}$.
We note that $M = X_6X_7$ is a normal subgroup of $W$, that $W/M$ is
isomorphic to $\F_q^5$ and that $X = X_1X_5$ is a transversal of
$W$ in $U/M_{10}$.  Therefore, we have linear
characters $\mu^{a_8,a_9}$ of $W$ defined by
$$
\mu^{a_8,a_9}(x_W(\bt))
= \phi(a_8 t_8 + a_9 t_9),
$$
where $a_8,a_9 \in \F_q^\times$.
Then we define
$\chi_{8,9,q^2}^{a_8,a_9}$ to be the character we obtain by inducing  $\mu^{a_8,a_9}$ to $U/M_{10}$.
The characters
$\chi_{8,9,q^2}^{a_8,a_9,b_2,b_3,b_4}$ are given by tensoring
$\chi_{8,9,q^2}^{a_8,a_9}$ with $\chi_{\lin}^{0,b_2,b_3,b_4}$.

To see that $\chi_{8,9,q^2}^{a_8,a_9}$ is irreducible we can consider the restriction $\mu'$ of
$\mu^{a_8,a_9}$ to the subgroup $V' = X_2X_6X_8X_9$ of $V$.  Then we let
$\chi' = \mu'^{U'}$, where $U'$ is the subgroup $X_1X_2X_5X_6X_8X_9$
of $U/M_{10}$.  Now $V'$ is normal in $U'$ and it is an easy calculation to check
that ${^x}\mu' \ne \mu'$ for all $x \in X$ with $x \ne 1$.  Now
we deduce that $\chi'$ is irreducible by an application of Clifford theory.
From this it follows that $\chi_{8,9,q^2}^{a_8,a_9}$ is irreducible and thus also
$\chi_{8,9,q^2}^{a_8,a_9,b_2,b_3,b_4}$ is irreducible.

We now state our proposition giving the values of the characters in $\cF_{8,9}$.

\begin{proposition} \label{P:89}
{\em (a)}
$$
\chi_{8,9, q^3}^{a_8,a_9,a_6,a_7}(x(\bt)) = q^3 \delta_{(t_1,t_2,t_3,t_4,t_5,a_8t_6 + a_9t_7),0}
\phi(a_6t_6+ a_7 t_7 + a_8t_8+ a_9t_9).
$$
{\em (b)} If $t_3 = 0$, then
$$
\chi_{8,9,q^2}^{a_8,a_9,b_2,b_3,b_4}(x(\bt)) = q^2 \delta_{(t_1,t_5,a_8t_2+a_9t_4,a_8t_6+a_9t_7),0}
\phi( b_2 t_2 +b_4t_4 + a_8t_8 + a_9t_9).
$$
If $t_3 \ne 0$, then
\begin{multline*}
\chi_{8,9,q^2}^{a_8,a_9,b_2,b_3,b_4}(x(\bt)) = \\
q \delta_{(t_1,a_8t_2+a_9t_4),0}
\phi( b_2 t_2 + b_3t_3 +b_4t_4+ a_8t_8 +a_9t_9 + t_5(a_8t_2+a_9t_4 - (a_8t_6+a_9t_7)/t_3)).
\end{multline*}
\end{proposition}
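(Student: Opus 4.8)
The plan is to evaluate both characters directly from the constructions given just above, using the commutator relations of Table~\ref{tab:commrelD4}. Throughout I work in the quotient $\bar U = U/M_{10}$, since $M_{10}$ lies in the kernel of every character in $\cF_{8,9}$, and I identify each $X_i$ with $i\le 9$ with its image in $\bar U$.

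For part~(a), recall $\chi := \chi_{8,9,q^3}^{a_8,a_9,a_6,a_7} = (\lambda^{a_8,a_9,a_6,a_7})^{\bar U}$, where $V = \prod_{i\neq 1,2,4}X_i$ is a \emph{normal} elementary abelian subgroup and $X = X_1X_2X_4$ is a transversal. Because $V$ is normal and abelian, the induction formula collapses: $\chi$ vanishes off $V$, and $\chi(g) = \sum_{x\in X}\lambda^{a_8,a_9,a_6,a_7}(g^x)$ for $g\in V$. Writing $g = x(\bt)$ with $t_1=t_2=t_4=0$ and $x = x_1(r_1)x_2(r_2)x_4(r_4)$, I would rewrite $g^x$ in the normal form of \eqref{e:x(t)}: the commutators that matter are $[x_1,x_3]$, $[x_2,x_3]$, $[x_3,x_4]$, which move the $X_3$-entry into $X_5X_6X_7$, and $[x_1,x_6]$, $[x_1,x_7]$, $[x_2,x_5]$, $[x_4,x_5]$, which feed entries into the central subgroup $X_8X_9$ of $\bar U$; all other relevant commutators land in $M_{10}$ and vanish. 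Applying $\lambda^{a_8,a_9,a_6,a_7}$ (trivial on $X_3$ and $X_5$) and summing over $r_1,r_2,r_4\in\F_q$ reduces the problem to products of elementary sums $\sum_{s\in\F_q}\phi(cs) = q\,\delta_{c,0}$. When $t_3 = 0$ these produce $q^3\delta_{(t_5,\,a_8t_6+a_9t_7),0}$ and the asserted phase $\phi(a_6t_6+a_7t_7+a_8t_8+a_9t_9)$, which with $t_1=t_2=t_4=0$ gives the stated formula. When $t_3\neq 0$ a quadratic cross-term $r_1r_2t_3$ (resp. $r_1r_4t_3$) appears, so summing first over $r_2$ (resp. $r_4$) forces $r_1$ to a value determined by $a_6,a_8$ (resp. by $a_7,a_9$); these coincide only if $(a_6,a_7)$ is a scalar multiple of $(a_8,a_9)$, which is excluded by hypothesis, so $\chi(g) = 0$, consistent with the $\delta$.

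For part~(b), recall $\chi_{8,9,q^2}^{a_8,a_9} = (\mu^{a_8,a_9})^{\bar U}$, where $W = \prod_{i\neq 1,5}X_i$ is \emph{not} normal, with transversal $X = X_1X_5$. I would first record the elementary simplification: since $X$ is a subgroup with $\bar U = XW$ and $X\cap W = 1$, every $y\in\bar U$ is uniquely $xv$ with $x\in X$, $v\in W$; as $hv\in W\iff h\in W$ for $v\in W$ while $\mu^{a_8,a_9}$ is invariant under $W$-conjugation, the induction formula becomes $\chi_{8,9,q^2}^{a_8,a_9}(g) = \sum_{x\in X:\ g^x\in W}\mu^{a_8,a_9}(g^x)$. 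For $g = x(\bt)$ and $x = x_1(s_1)x_5(s_5)$ I would compute $g^x$ in normal form: conjugation by $x_1(s_1)$ changes the $X_5$-entry to $t_5 - s_1t_3$ (via $[x_1,x_3]$), and commuting the resulting $X_5$-term past $x_2(t_2)$ and $x_4(t_4)$ contributes quadratic terms $s_1t_2t_3$, $s_1t_3t_4$ to the $X_8$- and $X_9$-entries (via $[x_2,x_5]$, $[x_4,x_5]$), while conjugation by $x_5(s_5)$ adds $s_5t_2$, $s_5t_4$ to those entries. Reading off, $g^x\in W$ iff $t_1 = 0$ and $t_5 = s_1t_3$. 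If $t_3 = 0$ this forces $t_5 = 0$ and leaves $s_1,s_5$ free, and $\sum_{s_1,s_5}\mu^{a_8,a_9}(g^x) = q^2\delta_{(a_8t_6+a_9t_7,\ a_8t_2+a_9t_4),0}\,\phi(a_8t_8+a_9t_9)$. If $t_3\neq 0$ then $s_1 = t_5/t_3$ is forced, only $s_5$ is free, and substituting and summing over $s_5$ gives $q\,\delta_{a_8t_2+a_9t_4,0}\,\phi\big(a_8t_8+a_9t_9 + t_5(a_8t_2+a_9t_4 - (a_8t_6+a_9t_7)/t_3)\big)$. Finally, $\chi_{8,9,q^2}^{a_8,a_9,b_2,b_3,b_4} = \chi_{8,9,q^2}^{a_8,a_9}\otimes\chi_{\lin}^{0,b_2,b_3,b_4}$, and multiplying by the value $\phi(b_2t_2+b_3t_3+b_4t_4)$ from \S\ref{ss:lin} yields the two displayed formulas.

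The only real obstacle is the bookkeeping in the normal-form rewriting of $g^x$: one must carefully track the degree-two cross-terms (such as $r_1r_2t_3$ in~(a) and $s_1t_2t_3$ in~(b)) produced when commuting an $X_5$-term past the $X_2$- and $X_4$-factors, since these are precisely what forces the split between $t_3 = 0$ and $t_3\neq 0$ in~(b) and, together with the hypothesis that $(a_6,a_7)$ is not a multiple of $(a_8,a_9)$, makes the $t_3\neq 0$ part of~(a) vanish. Everything else reduces to repeated use of $\sum_{s\in\F_q}\phi(cs) = q\,\delta_{c,0}$ together with the two structural facts already noted before the proposition, namely that $V$ is normal abelian and that $X$ is a subgroup complementing $W$.
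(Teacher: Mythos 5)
Your proposal is correct and follows essentially the same route as the paper: direct evaluation of the induced characters via the commutator relations, with the non-proportionality of $(a_6,a_7)$ and $(a_8,a_9)$ killing the $t_3\ne 0$ (and $t_5 \neq 0$) contributions in (a), and the forced value $s_1 = t_5/t_3$ producing the case split in (b). The only difference is organizational: in (a) you sum over the full transversal $X_1X_2X_4$ in one step, whereas the paper inducts in stages through the chain $V \le V_2 \le V_1 \le U/M_{10}$, one root subgroup at a time; the underlying computation is the same.
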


\begin{proof}
(a) We write $\lambda = \lambda^{a_8,a_9,a_6,a_7}$ and $\chi = \chi_{8,9,q^2}^{a_8,a_9,a_6,a_7}$.
We consider the series
of normal subgroups of $U/M_{10}$
$$
V_0 = U/M_{10}, \quad V_1 = \prod_{i \ne 1} X_i, \quad V_2 = \prod_{i \ne
1,2} X_i, \quad V_3 = V.
$$
First we calculate the values of $\lambda^{V_2}$.  We have $Z(V_2) = X_6X_7X_8X_9$, so that
$$
\lambda^{V_2}(x_{V_2}(\bt)) = \lambda^{V_2}(x_3(t_3)x_4(t_4)x_5(t_5))\lambda(x_6(t_6)x_7(t_7)x_8(t_8)x_9(t_9)).
$$
Also $\lambda^{V_2}(x_{V_2}(\bt)) = 0$ if $t_4 \ne 0$.
Thus it suffices to
determine $\lambda^{V_2}(x_3(t_3)x_5(t_5))$.
Now for $s \in \F_q$, we have $(x_3(t_3)x_5(t_5))^{x_4(s)} = x_3(t_3)x_5(t_5)x_7(-st_3)x_9(-st_5)$
in $U/M_{10}$.
Therefore,
\begin{align*}
\lambda^{V_2}(x_3(t_3)x_5(t_5))
&= \sum_{s \in \F_q}  \lambda(x_3(t_3)x_5(t_5)x_7(-st_3)x_9(-st_5)) \\
&= \sum_{s \in \F_q}  \phi(-a_7st_3-a_9st_5) \\
&= \sum_{s \in \F_q}  \phi(-s(a_7t_3+a_9t_5)) \\
&= q \delta_{a_7t_3+a_9t_5,0}.
\end{align*}
and thus
$$
\lambda^{V_2}(x_{V_2}(\bt)) = q \delta_{(t_4,a_7t_3+a_9t_5),0}\phi(a_6t_6+a_7t_7+a_8t_8+a_9 t_9).
$$
Next we induce $\lambda^{V_2}$ to $V_1$; this requires a very similar calculation, which
we omit.  We obtain
$$
\lambda^{V_1}(x_{V_1}(\bt)) = q^2 \delta_{(t_2,t_4,a_7t_3+a_9t_5,a_6t_3+a_8t_5),0}\phi(a_6t_6+a_7t_7 + a_8t_8 + a_9 t_9).
$$
Our assumption that $(a_6,a_7)$ is not a multiple
of $(a_8,a_9)$ implies that for the above to be nonzero we require $t_3,t_5=0$.
Thus we deduce that
$$
\lambda^{V_1}(x_{V_1}(\bt)) = q^2 \delta_{(t_2,t_3,t_4,t_5),0}\phi(a_6t_6+a_7t_7 + a_8t_8 + a_9 t_9).
$$
Now we do the final induction up to $U/M_{10}$ to obtain $\chi$.  First we observe that
$Z(V_0) = X_8X_9$ and it is easy to check that
$\chi(x(\bt)) = 0$ if any of $t_1$, $t_2$, $t_3$, $t_4$ or $t_5$ is equal to $0$.
Thus it suffices to calculate
$\chi(x_6(t_6)x_7(t_7))$,
and we obtain
\begin{align*}
\chi(x_6(t_6)x_7(t_7)) &= \sum_{s \in \F_q} \lambda^{V_1}(x_6(t_6)x_7(t_7)x_8(-st_6)x_9(-st_7)) \\
&= q^2 \sum_{s \in \F_q} \phi(a_6t_7+a_7t_7 - a_8st_6 - a_9st_7) \\
&= q^3 \delta_{0,a_8t_6 + a_9t_7} \phi(a_6t_6+a_7t_7).
\end{align*}
Putting this all together we obtain the stated value of $\chi_{8,9, q^3}^{a_8,a_9,a_6,a_7}$.

(b) We write $\mu = \mu^{a_8,a_9}$ and $\chi = \chi_{8,9,q^2}^{a_8,a_9}$.
In order to determine the value of $\chi$ we first note that $Z(U/M_{10}) = X_8X_9$ and that
we have $\chi(x(\bt)) = 0$ if $t_1 \ne 0$.  Thus it suffices to calculate $\chi(x_3(t_3)x_2(t_2)x_4(t_4)x_5(t_5)x_6(t_6)x_7(t_7))$.
Next we calculate that
\begin{multline*}
(x_3(t_3)x_2(t_2)x_4(t_4)x_5(t_5)x_6(t_6)x_7(t_7))^{x_5(s_5)x_1(s_1)} =
\\
x_3(t_3)x_2(t_2)x_4(t_4)x_5(t_5-s_1t_3)x_6(t_6)x_7(t_7)x_8(-s_1t_6+s_5t_2+s_1t_2t_3)x_9(-s_1t_7+s_5t_4+s_1t_3t_4).
\end{multline*}
Thus, $\chi(x_3(t_3)x_2(t_2)x_4(t_4)x_5(t_5)x_6(t_6)x_7(t_7))$ is equal to
\begin{align*}
 & \: \sum_{s_1,s_5 \in \F_q} \mu(x_3(t_3)x_2(t_2)x_4(t_4)x_5(t_5)x_6(t_6)x_7(t_7))^{x_5(s_5)x_1(s_1)}) \\
= & \:  \sum_{s_1,s_5 \in \F_q} \delta_{t_5-s_1t_3,0} \phi(a_8(-s_1t_6+s_5t_2+s_1t_2t_3)+a_9(-s_1t_7+s_5t_4+s_1t_3t_4)).
\end{align*}
If $t_3 = 0$, then we see that this is equal to
$$
\delta_{t_5,0} \sum_{s_1,s_5 \in \F_q} \phi(a_8(-s_1t_6+s_5t_2)+a_9(-s_1t_7+s_5t_4)) = q^2 \delta_{(t_5,a_8t_2+a_9t_4,a_8t_6+a_9t_7),0}.
$$
Whereas for $t_3 \ne 0$, in the sum over $s_1$ we only get a nonzero contribution for $s_1 = t_5/t_3$, thus we get
\begin{multline*}
\sum_{s_5 \in \F_q} \phi(a_8(-t_5t_6/t_3+s_5t_2+t_2t_5)+a_9(-t_5t_7/t_3+s_5t_4+t_4t_5)) = \\
q \delta_{a_8t_2+a_9t_4,0}
\phi(t_5(a_8t_2+a_9t_4 - (a_8t_6+a_9t_7)/t_3)).
\end{multline*}
From the formulas above we deduce the stated values of $\chi_{8,9,q^2}^{a_8,a_9,b_2,b_3,b_4}$.
\end{proof}

We note the characters $\chi_{8,9, q^3}^{a_8,a_9,a_6,a_7}$ can be equal for different
values of $a_i$.  From the character values we observe that $\chi_{8,9, q^3}^{a_8,a_9,a_6,a_7} =
\chi_{8,9, q^3}^{a'_8,a'_9,a'_6,a'_7}$ if and only if $(a'_8,a'_9,a'_6,a'_7)$ is obtained from
$(a_8,a_9,a_6,a_7)$ by adding a multiple of $(a_8,a_9)$ to $(a_6,a_7)$.
So we choose $(a_6,a_7)$ such that $a_6a_9+a_7a_8=0$, except when $p=2$ where we just put $a_6=0$.

Also we observe that $\chi_{8,9,q^3}^{a_8,a_9,b_2,b_3,b_4} =
\chi_{8,9,q^3}^{a'_8,a'_9,b'_2,b'_3,b'_4}$ if and only if $(a'_8,a'_9,b'_2,b'_3,b'_4)$ is obtained from
$(a_8,a_9,b_2,b_3,b_4)$ by adding a multiple of $(a_8,a_9)$ to $(b_2,b_4)$.
So we choose a coset representative such that $b_2a_9+b_4a_8=0$ (except in the
case $p=2$ where we just take $b_2=0$).

We move on to briefly discuss the characters in the family $\cF_8$.  These can be defined in essentially the same
way as those in $\cF_{8,9}$.  The characters of degree $q^3$ are denoted $\chi_{8,q^3}^{a_8,a_7}$ and
are defined in exactly the same way as
$\chi_{8,9, q^3}^{a_8,a_9,a_6,a_7}$ except we set $a_9=0$ and $a_6=0$.  The characters $\chi_{8,q^2}^{a_8,b_3,b_4}$
are defined in the same way as $\chi_{8,9,q^2}^{a_8,a_9,b_2,b_3,b_4}$ except we set $a_9 = 0$ and $b_2=0$.
Thus we can immediately deduce the values of these characters given in the proposition below.

\begin{proposition} \label{P:8}
{\em (a)}
$$
\chi_{8,q^3}^{a_8,a_7}(x(\bt)) =  q^3 \delta_{(t_1,t_2,t_3,t_4,t_5,t_6),0}
\phi(a_7 t_7 + a_8t_8).
$$
{\em (b)}
If $t_3 = 0$, then
$$
\chi_{8,q^2}^{a_8,b_3,b_4}(x(\bt)) = q^2 \delta_{(t_1,t_2,t_5,t_6),0}
\phi(b_4t_4 + a_8t_8).
$$
If $t_3 \ne 0$, then
$$
\chi_{8,q^2}^{a_8,b_3,b_4}(x(\bt)) =  q \delta_{(t_1,t_2),0}
\phi(b_3t_3 +b_4t_4+ a_8(t_8-t_5t_6/t_3)).
$$
\end{proposition}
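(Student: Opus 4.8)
The plan is to read off both formulae from the computations already carried out in the proof of Proposition~\ref{P:89}, specialised to the parameter values that define the family $\cF_8$. As noted just before the statement, $\chi_{8,q^3}^{a_8,a_7}$ is constructed exactly like $\chi_{8,9,q^3}^{a_8,a_9,a_6,a_7}$ but with the linear character of $V=\prod_{i\neq 1,2,4}X_i\leq U/M_{10}$ taken to be $x_V(\bt)\mapsto\phi(a_7t_7+a_8t_8)$, i.e.\ with $a_9=0$ and $a_6=0$, and $\chi_{8,q^2}^{a_8,b_3,b_4}$ is constructed exactly like $\chi_{8,9,q^2}^{a_8,a_9,b_2,b_3,b_4}$ with $a_9=0$ and $b_2=0$. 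So the entire proof amounts to re-running the two inductive computations from the proof of Proposition~\ref{P:89} with these values inserted and collecting the surviving terms.

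For part~(a) I would induce $\lambda^{a_8,a_7}$ successively from $V$ to $V_2=\prod_{i\neq 1,2}X_i$, then to $V_1=\prod_{i\neq 1}X_i$, then to $U/M_{10}$, following the proof of Proposition~\ref{P:89}(a) line by line; the structural facts used there (the centres $Z(V_2)=X_6X_7X_8X_9$ and $Z(U/M_{10})=X_8X_9$, and the conjugation formulae) do not involve the parameters. The single place that argument invokes the hypothesis that $(a_6,a_7)$ is not a multiple of $(a_8,a_9)$ — the simplification of $\lambda^{V_1}$ — still applies, since $a_7\neq 0$ forces $(0,a_7)$ not to be a multiple of $(a_8,0)$; and every factor of the shape $\delta_{a_7t_3,0}$, $\delta_{a_8t_5,0}$, $\delta_{a_8t_6,0}$ that appears collapses to $\delta_{t_3,0}$, $\delta_{t_5,0}$, $\delta_{t_6,0}$ because $a_7,a_8\in\F_q^\times$. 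Assembling the surviving factors gives $q^3\,\delta_{(t_1,t_2,t_3,t_4,t_5,t_6),0}\,\phi(a_7t_7+a_8t_8)$, as claimed.

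For part~(b) I would repeat the computation in the proof of Proposition~\ref{P:89}(b): the character vanishes unless $t_1=0$, and otherwise equals the sum over the transversal $X=X_1X_5$ of $\mu^{a_8}$ evaluated on the conjugates $(x_3(t_3)x_2(t_2)x_4(t_4)x_5(t_5)x_6(t_6)x_7(t_7))^{x_5(s_5)x_1(s_1)}$, using the same commutator identity but pairing only against $a_8t_8$ (the $a_9t_9$ term, and with it any dependence on $t_9$, disappears). For $t_3=0$ the double sum over $s_1,s_5$ yields $q^2\,\delta_{(t_1,t_2,t_5,t_6),0}$, and restoring the value $\phi(a_8t_8)$ of the central factor $x_8(t_8)$ together with the contribution $\phi(b_4t_4)$ of the tensoring character $\chi_{\lin}^{0,0,b_3,b_4}$ gives the first displayed formula. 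For $t_3\neq 0$ the $s_1$-sum forces $s_1=t_5/t_3$ and the $s_5$-sum forces $a_8t_2=0$, hence $t_2=0$; on this support the cross term $t_5\cdot a_8t_2$ vanishes, leaving $q\,\delta_{(t_1,t_2),0}\,\phi(b_3t_3+b_4t_4+a_8(t_8-t_5t_6/t_3))$, which is the second displayed formula.

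I do not expect a real obstacle: the statement is a parameter specialisation of Proposition~\ref{P:89}. The two points that need a moment's care are (i) checking that the specialised linear characters still fit the Clifford-theoretic set-up of \S\ref{ss:89} — they do, since the irreducibility arguments there only use $a_7,a_8\neq 0$ — and (ii) keeping track of the $\delta$-collapses and the vanishing of the $t_5\cdot a_8t_2$ term so that the formulae emerge exactly in the stated form.
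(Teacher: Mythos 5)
Your proposal is correct and matches the paper's approach exactly: the paper itself obtains Proposition~\ref{P:8} by declaring $\chi_{8,q^3}^{a_8,a_7}$ and $\chi_{8,q^2}^{a_8,b_3,b_4}$ to be the $a_9=0$, $a_6=0$ (resp.\ $a_9=0$, $b_2=0$) specialisations of the $\cF_{8,9}$ constructions and reading the values off Proposition~\ref{P:89}. Your extra checks --- that the ``not a multiple'' hypothesis degenerates harmlessly, that the nonzero parameters $a_7,a_8$ collapse the $\delta$-factors, and that the $t_5\,a_8t_2$ cross term dies on the support $t_2=0$ --- are exactly the details the paper leaves implicit.
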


\subsection{The Family \texorpdfstring{$\cF_{8,9,10}$}{F8,9,10}} \label{ss:8910}
We begin with the assumption that $p > 2$, as the situation for $p=2$ is slightly
different and is discussed at the end of the section.

First we note that $M_{11}$ lies in the kernel of the characters in $\cF_{8,9,10}$.
We have that $U/M_{11} = \prod_{i = 1}^{10} X_i$ and we define the normal subgroup
$$
V = \prod_{i \ne 1,2,4} X_i
$$
of $U/M_{11}$.
We have that $V$ is isomorphic
to $\F_q^7$ and that $X = X_1X_2X_4$ is a transversal of $V$ in $U/M_{11}$. We define the
character $\lambda^{a_8,a_9,a_{10}}$ of $V$ by
$$
\lambda^{a_8,a_9,a_{10}}(x_V(\bt))
= \phi(a_8t_8+a_9t_9+a_{10} t_{10})
$$
for $a_8,a_9,a_{10} \in \F_q^\times$.
Then we define $\chi_{8,9,10}^{a_8,a_9,a_{10}}$ to be the character obtained by inducing
$\lambda^{a_8,a_9,a_{10}}$ to $U/M_{11}$.
The irreducible characters $\chi_{8,9,10}^{a_8,a_9,a_{10},b_3}$
in the family $\cF_{8,9,10}$
are then obtained by tensoring $\chi_{8,9,10}^{a_8,a_9,a_{10}}$ with the linear character $\chi_{\lin}^{0,0,b_3,0}$.
The construction of these characters is the same as in \cite[\S4]{HLM}, where they are shown to
be irreducible and distinct.

In order to obtain the values of $\chi$ we take advantage of the conjugacy class
representatives in Table \ref{tab:representatives}.  First note that
since $\chi(x(\bt)) = 0$ for $x(\bt) \not\in V$, so we do not need to consider
any conjugacy classes in Table \ref{tab:representatives} above the family $\cC_3$.
We split up our calculations for the remaining elements by considering elements in the family $\cC_3$,
and those lying in any of the other classes.

\begin{proposition} \label{P:8910}
Suppose $p > 2$.
\begin{itemize}
\item[(a)] Suppose that at least one of $t_1$, $t_2$ or $t_4$ is nonzero.  Then $\chi_{8,9,10}^{a_8,a_9,a_{10},b_3}(x(\bt)) = 0$.
\item[(b)] Let $x(\bt) = x_3(t_3)x_8(t_8)x_9(t_9)x_{10}(t_{10})$, where $t_3 \ne 0$.  Then
$$
\chi_{8,9,10}^{a_8,a_9,a_{10},b_3}(x(\bt)) = q \phi( b_3t_3 + a_8 t_8 + a_9t_9 + a_{10}t_{10})\sum_{s \in \F_q} \phi(-a_8a_9a_{10}t_3s^2) .
$$
\item[(c)]  Let $x(\bt) = x_5(t_5)x_6(t_6)x_7(t_7)x_8(t_8)x_9(t_9)x_{10}(t_{10})$.  Then
$$
\chi_{8,9,10}^{a_8,a_9,a_{10},b_3}(x(\bt)) = q^3 \delta_{(t_5,t_6,t_7),0} \phi( a_8 t_8 + a_9t_9 + a_{10}t_{10}).
$$
\end{itemize}
\end{proposition}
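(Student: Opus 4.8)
The plan is to use the realisation of $\chi_{8,9,10}^{a_8,a_9,a_{10}}$ as the character of $U/M_{11}$ induced from the linear character $\lambda^{a_8,a_9,a_{10}}$ of the normal subgroup $V = \prod_{i\ne 1,2,4}X_i$, with transversal $X = X_1X_2X_4$. For a linear character of a normal subgroup the induction formula reads $\chi_{8,9,10}^{a_8,a_9,a_{10}}(g) = \sum_{x\in X}\dot\lambda^{a_8,a_9,a_{10}}(g^x)$, where $g^x = x^{-1}gx$; and since $\chi_{8,9,10}^{a_8,a_9,a_{10},b_3} = \chi_{8,9,10}^{a_8,a_9,a_{10}}\otimes\chi_{\lin}^{0,0,b_3,0}$, its value at $x(\bt)$ is the above multiplied by $\chi_{\lin}^{0,0,b_3,0}(x(\bt)) = \phi(b_3t_3)$. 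Part~(a) is then immediate: the image of $x(\bt)$ in $U/M_{11}$ maps to $x_1(t_1)x_2(t_2)x_4(t_4)$ under the projection $U/M_{11}\to (U/M_{11})/V\cong X$, so $x(\bt)$ lies in $V$ if and only if $t_1=t_2=t_4=0$; a character induced from a normal subgroup vanishes off that subgroup, and multiplying by $\phi(b_3t_3)$ preserves this.

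For~(b) and~(c) the element $x(\bt)$ lies in $V$, so $g^x\in V$ for every $x\in X$ and $\dot\lambda(g^x) = \lambda(g^x)$; the heart of the proof is therefore to compute $g^{x_1(s_1)x_2(s_2)x_4(s_4)}$ modulo $M_{11}$ for a general element $g = x_3(t_3)x_5(t_5)\cdots x_{10}(t_{10})$ of $V$. I would conjugate successively by $x_1(s_1)$, $x_2(s_2)$, $x_4(s_4)$, using Table~\ref{tab:commrelD4}; modulo $M_{11}$ the group $V$ is abelian and $X_{11},X_{12}$ die, so the only relevant commutators are $[x_1,x_3]=x_5$, $[x_2,x_3]=x_6$ and $[x_3,x_4]=x_7(-tu)$, which shift the $x_5,x_6,x_7$-coordinates, together with $[x_1,x_6]=x_8$, $[x_2,x_5]=x_8$, $[x_1,x_7]=x_9$, $[x_4,x_5]=x_9$, $[x_2,x_7]=x_{10}$, $[x_4,x_6]=x_{10}$, which feed the $x_8,x_9,x_{10}$-coordinates that $\lambda$ detects. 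Reading off these three coordinates and factoring out $\phi(a_8t_8+a_9t_9+a_{10}t_{10})$ should give
\begin{multline*}
\chi_{8,9,10}^{a_8,a_9,a_{10}}(g) = \phi(a_8t_8+a_9t_9+a_{10}t_{10}) \\
\times\sum_{s_1,s_2,s_4\in\F_q}\phi\bigl(-s_1(a_8t_6+a_9t_7)-s_2(a_8t_5+a_{10}t_7)-s_4(a_9t_5+a_{10}t_6) \\
+\,t_3(a_8s_1s_2+a_9s_1s_4+a_{10}s_2s_4)\bigr).
\end{multline*}
I expect this bookkeeping to be the main obstacle. The quadratic cross-terms $s_is_jt_3$ arise because the $x_5$- and $x_6$-factors, whose coordinates have already picked up a multiple of $t_3$ from an earlier conjugation, are conjugated a second time to produce further $x_8,x_9,x_{10}$-contributions; and one has to be careful with the sign coming from $[x_3,x_4]=x_7(-tu)$ and with the fact that each conjugation $x_i(u)^{x_j(s)}$ contributes with a minus sign.

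Given this identity, parts~(b) and~(c) reduce to short exponential-sum evaluations. For~(c) we have $t_1=t_2=t_3=t_4=0$, the quadratic part vanishes, and the triple sum factors as a product of three linear Gauss sums, each equal to $q$ times a Kronecker delta; the three conditions $a_8t_6+a_9t_7 = a_8t_5+a_{10}t_7 = a_9t_5+a_{10}t_6 = 0$ are equivalent to $t_5=t_6=t_7=0$ because the corresponding $3\times 3$ matrix has determinant $2a_8a_9a_{10}\ne 0$ --- this is exactly where the hypothesis $p>2$ is used, and it explains why $\cF_{8,9,10}$ must be treated differently for $p=2$. For~(b) we have $t_5=t_6=t_7=0$ and $t_3\ne 0$, so only the quadratic part remains; summing over $s_1$ first yields $q\,\delta_{a_8s_2+a_9s_4,0}$, and parametrising the resulting line by $(s_2,s_4) = (-a_9s,a_8s)$, on which $s_2s_4 = -a_8a_9s^2$, collapses the sum to $q\sum_{s\in\F_q}\phi(-a_8a_9a_{10}t_3s^2)$. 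Multiplying through by $\phi(b_3t_3)$ yields the stated formulas in both cases.
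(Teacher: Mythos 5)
Your proposal is correct and follows essentially the same route as the paper: induce the linear character $\lambda^{a_8,a_9,a_{10}}$ from the normal subgroup $V$ over the transversal $X_1X_2X_4$, compute the conjugates via the commutator relations, and evaluate the resulting exponential sums (three linear Gauss sums with a $3\times 3$ system of determinant $2a_8a_9a_{10}$ for part (c), and the substitution $(s_2,s_4)=(-a_9s,a_8s)$ collapsing the quadratic sum for part (b)). The only cosmetic difference is that you package (b) and (c) into a single general formula for elements of $V$ before specialising, whereas the paper treats the two cases as separate conjugation computations.
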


\begin{remark}
Before we proceed to the proof, we remark on the term $\sum_{t \in \F_q} \phi(-a_8a_9a_{10}t_3t^2)$ in (b) (for the case $t_3 \ne 0$). Clearly this can be deduced from
the value of $\sum_{t \in \F_q} \phi(t^2)$: if $-a_8a_9a_{10}t_3 \in \F_q^\times$ is a square, then $\sum_{t \in \F_q} \phi(-a_8a_9a_{10}t_3t^2) = \sum_{t \in \F_q} \phi(t^2)$ and if $-a_8a_9a_{10}t_3 \in \F_q^\times$ is not a square, then $\sum_{t \in \F_q} \phi(-a_8a_9a_{10}t_3t^2) = -\sum_{t \in \F_q} \phi(t^2)$.
  The sum $\sum_{t \in \F_q} \phi(t^2)$
is referred to as a {\em quadratic Gauss sum}.  For $q = p$ the value of this Gauss sum is well known: if $p \equiv 1 \bmod 4$, then $\sum_{t \in \F_p} \phi(t^2) = \sqrt p$, and if $p \equiv -1 \bmod 4$, then $\sum_{t \in \F_p} \phi(t^2) = i \sqrt p$.
However, for general $q$ the situation is more complicated: it is known that the absolute value is $\sqrt q$,
see for example \cite[Proposition 11.5]{IK}, but a formula giving the exact value appears not to be known in general.
\end{remark}

\begin{proof}
We begin by noting that it suffices to consider the case $b_3 =0$.  We write $\lambda = \lambda^{a_8,a_9,a_{10}}$ and
$\chi = \chi_{8,9,10}^{a_8,a_9,a_{10}}$.

(a) We dealt with this case in the discussion before the statement of the proposition.

(b) The first step is to calculate in $U/M_{11}$ that
$$
x_3(t_3)^{x_1(s_1)x_2(s_2)x_4(s_4)} =
x_3(t_3)x_5(-s_1t_3)x_6(-s_2t_3)x_7(-s_4t_3)x_8(s_1s_2t_3)x_9(s_1s_4t_3)x_{10}(s_2s_4t_3).
$$
Therefore, we have
\begin{align*}
\chi(x_3(t_3)) &= \sum_{s_1,s_2,s_4\in\F_q}\phi(a_8s_1s_2t_3+a_9s_1s_4t_3+a_{10}s_2s_4t_3)\\
&= \sum_{s_2,s_4\in\F_q}\phi(a_{10}s_2s_4t_3)\sum_{s_1\in\F_q}\phi(s_1(a_8s_2+a_9s_4)t_3)\\
&=\sum_{s_2,s_4\in\F_q}\phi(a_{10}t_3s_2s_4)q\delta_{0,a_8s_2 +a_9s_4}\\
&=q\sum_{s_4\in\F_q}\phi(-a_{10}a_9s_4^2t_3/a_8) \\
&=q\sum_{s\in\F_q}\phi(-a_8a_9a_{10}t_3s^2).
\end{align*}
For the last equality we just substitute $s_4 = a_8s$.
Now using that $Z(U/M_{11}) = X_8X_9X_{10}$, the value of $\chi$ given in (b) can be deduced.

(c)  In this case the starting point is to calculate in $U/M_{11}$ that
\begin{multline*}
(x_5(t_5)x_6(t_6)x_7(t_7))^{x_1(s_1)x_2(s_2)x_4(s_4)} =
\\
x_5(t_5)x_6(t_6)x_7(t_7)x_8(-s_1t_6-s_2t_5)x_9(-s_1t_7-s_4t_5)x_{10}(-s_2t_7-s_4t_6).
\end{multline*}
Therefore, $\chi(x_5(t_5)x_6(t_6)x_7(t_7))$ is equal to
\begin{align*}
 & \: \sum_{s_1\in\F_q}\phi(-s_1(a_8t_6+a_9t_7))\sum_{s_2\in\F_q}\phi(-s_2(a_8t_5+a_{10}t_7))\sum_{s_4\in\F_q}\phi(-s_4(a_9t_5+a_{10}t_6))\\
= & \:  q^3 \delta_{(a_8t_6+a_9t_7,a_8t_5+a_{10}t_7,a_9t_5+a_{10}t_6),0}.
\end{align*}
Now the linear system
$$
\begin{array}{llllllll} & &a_8t_6&+&a_9t_7&=&0\\a_8t_5&&&+&a_{10}t_7&=&0\\a_9t_5&+&a_{10}t_6&&&=&0\end{array}
$$
has only the trivial solution $t_5=t_6=t_7=0$, as we are assuming $p > 2$.
Thus we get $\chi(x_5(t_5)x_6(t_6)x_7(t_7)) =  q^3 \delta_{(t_5,t_6,t_7),0}$.
Now using that $Z(U/M_{11}) = X_8X_9X_{10}$, the value of $\chi_{8,9,10}^{a_8,a_9,a_{10},b_3}$ given in (c) can be deduced.
\end{proof}

To end this section we consider the case $p=2$, where the situation is
more complicated.  The characters $\chi_{8,9,10,q^3}^{a_8,a_9,a_{10}}$ are defined in
the same way as the characters $\chi_{8,9,10}^{a_8,a_9,a_{10}}$ for $p>3$.
There are also characters denoted $\chi_{8,9,10,q^3/2}^{a_8,a_9,a_{10},a_{5,6,7},d_{1,2,4},d_3}$, which are
defined as follows.

Again we let $V = \prod_{i \ne 1,2,4} X_i$ and let $a = a_{5,6,7},a_8,a_9,a_{10} \in \F_q^\times$.
We define the linear character
$\lambda = \lambda^{a_8,a_9,a_{10},a}$ of $V$ by
$$
\lambda(x_V(\bt)) = \phi(a_8a_9at_5+a_8a_{10}at_6+a_9a_{10}at_7+a_8t_8+a_9t_9+a_{10}t_{10}).
$$
Now let $Y_{124}=\{1,x_1(a_{10}a)x_2(a_9a)x_4(a_8a)\}$
and $\bar V = Y_{124}V$.    We can  extend $\lambda$ to $\bar V$ in two ways, namely $\bar \lambda^{d_{1,2,4}}$,
where $\bar \lambda^{d_{1,2,4}}(x_1(a_{10}a)x_2(a_9a)x_4(a_8a)) = (-1)^{d_{1,2,4}}$
for $d_{1,2,4} \in \F_2$.
We then induce $\bar \lambda^{d_{1,2,4}}$ to $U/M_{11}$ to obtain a character $\psi^{d_{1,2,4}}$, which is
irreducible as shown in \cite[\S4]{HLM}.
Now let $d_3$ be either 0 or an element of $\F_q$, which is not in the image of the map $s \mapsto aa_8a_9a_{10}s+a_8a_9a_{10}s^2$.
Then we define $\chi_{8,9,10,q^3/2}^{a_8,a_9,a_{10},a_{5,6,7},d_{1,2,4},d_3}$ by tensoring
$\psi^{d_{1,2,4}}$ with $\chi_{\lin}^{0,0,d_3,0}$.

The proposition below states the values of these characters, these can be deduced from \cite[Thm.\ 2.3]{LM}.
Note that with our specific choice of $\phi$,
we have $\ker(\phi)=\{t^p-t:t\in\F_q\}$. Thus for each $a\in\F_q^\times$, we have that
$\ker(\phi)=a^{-p}\T_a$, where $\T_a =\{t^p-a^{p-1}t \mid t\in\F_q\}$ as defined in \cite[Definition 1.2]{LM}.
In turn this implies that $a_\phi$ as defined in \cite[Definition 1.4]{LM}
is equal to $a^{-p}$.
We note that the values for $\chi_{8,9,10,q^3}^{a_8,a_9,a_{10}}$ look different
to those for $p > 2$, which is explained by the fact that the quadratic Gauss sums
for $p=2$ are clearly zero.

\begin{proposition}
\label{p:8910p2}
Let $p = 2$.

\noindent
{\em (a)}
$$
\chi_{8,9,10,q^3}^{a_8,a_9,a_{10}}(x(\bt))=\delta_{(t_1,t_2,t_3,t_4,a_8t_5+a_{10}t_7,a_8t_6+a_9t_7),0}q^3\phi(a_8t_8+a_9t_9+a_{10}t_{10}).
$$
{\em (b)} Let $a = a_{5,6,7}$.
If $(t_1,t_2,t_4) \not\in \{(0,0,0),(a_{10}a,a_9a,a_8a)\}$, then
$$
\chi_{8,9,10,q^3/2}^{a_8,a_9,a_{10},a_{5,6,7},d_{1,2,4},d_3}(x(\bt)) = 0.
$$
If $x(\bt) = x_1(a_{10}r)x_2(a_9r)x_4(a_8r)x_5(a_{10}t)x_6(a_9t)x_7(a_8t)x_8(t_8)x_9(t_9)x_{10}(t_{10})$,
where $r \in \{0,a\}$ and $t,t_8,t_9,t_{10} \in \F_q$, then
$$
\chi_{8,9,10,q^3/2}^{a_8,a_9,a_{10},a_{5,6,7},d_{1,2,4},d_3}(x(\bt)) = \frac{q^3}{2}\phi(d_{1,2,4}r+a_8a_9a_{10}at+a_8t_8+a_9t_9+a_{10}t_{10}).
$$
For $r \in \{0,a\}$ and $x(\bt) = x_3(t_3)x_1(a_{10}r)x_2(a_9r)x_4(a_8r)x_5(t_5)x_6(t_6)x_7(t_7)x_8(t_8)x_9(t_9)x_{10}(t_{10})$,
\begin{multline*}
\chi_{8,9,10,q^3/2}^{a_8,a_9,a_{10},a_{5,6,7},d_{1,2,4},d_3}(x(\bt)) = \frac{q^2}{2}\delta_{t_3,\frac{a^{-p}}{a_8a_9a_{10}}}\phi(d_{1,2,4}r+d_3t_3+a_8a_9a_{10}a\frac{t_7}{a_8}+ \\
\frac{(a_8a_9a_{10})^2}{a^{-p}}(\frac{t_5}{a_{10}}+\frac{t_7}{a_8})(\frac{t_6}{a_9}+\frac{t_7}{a_8})+a_8t_8+a_9t_9+a_{10}t_{10}).
\end{multline*}
\end{proposition}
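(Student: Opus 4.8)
The plan is to reduce both parts to the character values obtained in \cite[Thm.~2.3]{LM}, using the dictionary recorded just before the statement (namely $a_\phi = a^{-p}$ and $\ker\phi = a^{-p}\T_a$), and to supply by hand the two points that are genuinely special to $p = 2$. First note that tensoring by $\chi_\lin^{0,0,b_3,0}$ (respectively $\chi_\lin^{0,0,d_3,0}$) only multiplies a value at $x(\bt)$ by $\phi(b_3 t_3)$ (respectively $\phi(d_3 t_3)$), so it suffices to compute the values of $\chi_{8,9,10,q^3}^{a_8,a_9,a_{10}}$ and of $\psi^{d_{1,2,4}}$ themselves. Both are induced from a linear character of a normal subgroup of $U/M_{11}$ — the first from $\lambda^{a_8,a_9,a_{10}}$ on $V$, the second from $\bar\lambda^{d_{1,2,4}}$ on $\bar V = Y_{124}V$; here $\bar V$ is normal in $U/M_{11}$ because $[X_i, U/M_{11}] \subseteq V$ for $i \in \{1,2,4\}$ by Table~\ref{tab:commrelD4} and $V$ is normal. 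Consequently each character vanishes off the subgroup from which it is induced, which already gives the vanishing statements: the condition $\delta_{(t_1,t_2,t_4),0}$ in part~(a), and the first assertion of part~(b).

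For part~(a) I would follow the construction and computation in the proof of Proposition~\ref{P:8910}, now carried out in characteristic $2$ on a general element $x(\bt) \in V$, expanding $\chi_{8,9,10,q^3}^{a_8,a_9,a_{10}}(x(\bt)) = \sum_{s_1,s_2,s_4 \in \F_q} \lambda^{a_8,a_9,a_{10}}\bigl( x(\bt)^{x_1(s_1)x_2(s_2)x_4(s_4)} \bigr)$ by means of Table~\ref{tab:commrelD4}. Summing over $s_1$ gives $q\,\delta_{A,0}$ with $A = t_3(a_8 s_2 + a_9 s_4) + a_8 t_6 + a_9 t_7$. If $t_3 = 0$ the remaining double sum factorises and yields the conditions $a_8 t_6 + a_9 t_7 = a_8 t_5 + a_{10} t_7 = a_9 t_5 + a_{10} t_6 = 0$, the last of which is the sum of the first two in characteristic $2$; the two surviving conditions give exactly the displayed delta. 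If $t_3 \ne 0$ the condition $A = 0$ expresses $s_2$ as an affine function of $s_4$, the residual exponent becomes a quadratic $\alpha s_4^2 + \beta s_4 + \gamma$ with $\alpha = a_9 a_{10} t_3 / a_8 \ne 0$, and a direct check shows $\beta = 0$; since $s \mapsto s^2$ is a bijection of $\F_q$, $\sum_{s_4} \phi(\alpha s_4^2) = q\,\delta_{\alpha,0} = 0$, so the value vanishes. This is the ``quadratic Gauss sums vanish'' phenomenon mentioned before the statement, and it finishes (a); alternatively (a) is the specialisation of \cite[Thm.~2.3]{LM} to the degree-$q^3$ characters.

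For part~(b) the substance is to identify $\psi^{d_{1,2,4}}$ with the relevant family in \cite[Thm.~2.3]{LM}. I would fix the transversal $\{\, x_1(s_1)x_2(s_2)x_4(s_4) \,\}$ of $\bar V$ in $U/M_{11}$, with $(s_1,s_2,s_4)$ running over a transversal of the order-$2$ subgroup $\langle (a_{10}a, a_9 a, a_8 a) \rangle$ of $\F_q^3$, and evaluate $\psi^{d_{1,2,4}}(x(\bt)) = \sum_{(s_1,s_2,s_4)} \dot{\bar\lambda}^{d_{1,2,4}}\bigl( x(\bt)^{x_1(s_1)x_2(s_2)x_4(s_4)} \bigr)$, using that $\bar\lambda^{d_{1,2,4}}$ is $\bar V$-invariant. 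When the $X_5X_6X_7$-coordinates of $x(\bt)$ form the multiple $(a_{10}t, a_9 t, a_8 t)$ of $(a_{10}, a_9, a_8)$, the terms depending on $(s_1,s_2,s_4)$ cancel (a characteristic-$2$ cancellation of the same type as in part~(a)), so the summand is constant over the transversal of size $q^3/2$; this gives the second displayed formula, the extra phase in the case $(t_1,t_2,t_4) = (a_{10}a, a_9 a, a_8 a)$ being the value of $\bar\lambda^{d_{1,2,4}}$ on the nontrivial element of $Y_{124}$. When $t_3 \ne 0$ the $X_3$-conjugation terms reappear; summing over two of $s_1, s_2, s_4$ leaves one sum of the shape $\sum_{s \in \F_q} \phi(\alpha s^2 + \beta s + \gamma) = q\, \phi(\gamma)\, \delta_{\beta^2, \alpha}$, a clean characteristic-$2$ identity (valid because $s \mapsto s^2$ is an additive bijection). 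Here $\alpha, \beta$ are explicit expressions in $a, a_8, a_9, a_{10}, t_3$ with $\alpha$ proportional to $a_8 a_9 a_{10} t_3$, so that $\delta_{\beta^2, \alpha}$ becomes, via $\ker\phi = a^{-p}\T_a$ and $a_\phi = a^{-p}$, the constraint $\delta_{t_3,\, a^{-p}/(a_8 a_9 a_{10})}$; and $\gamma$ unwinds to the displayed phase, in which the term $\tfrac{(a_8 a_9 a_{10})^2}{a^{-p}} \bigl( \tfrac{t_5}{a_{10}} + \tfrac{t_7}{a_8} \bigr)\bigl( \tfrac{t_6}{a_9} + \tfrac{t_7}{a_8} \bigr)$ is a scalar times the product of the two ``defect'' linear forms $a_8 t_5 + a_{10} t_7$ and $a_8 t_6 + a_9 t_7$ from part~(a). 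Matching this with \cite[Thm.~2.3]{LM} completes the proof.

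The main obstacle is exactly this final matching: one has to transcribe our root-element labels, the left-to-right ordering convention in $x(\bt)$, the commutator signs of Table~\ref{tab:commrelD4}, and the parameter $a = a_{5,6,7}$ into the framework of \cite{LM}, and in particular to pin down the scalar $a^{-p}$ and the $\phi(d_{1,2,4}r)$-phase in the $t_3 \ne 0$ case. Once the identity $\sum_{s} \phi(\alpha s^2 + \beta s + \gamma) = q\, \phi(\gamma)\, \delta_{\beta^2, \alpha}$ and the bijectivity of the Frobenius squaring are in place, the remaining characteristic-$2$ cancellations collapse the expressions to the stated formulas.
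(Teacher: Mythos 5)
Your proposal is correct, and for the decisive step it follows the same route as the paper: the paper offers no proof of this proposition beyond the remark that the values ``can be deduced from \cite[Thm.~2.3]{LM}'' together with the dictionary $\ker\phi=a^{-p}\T_a$, $a_\phi=a^{-p}$, and you likewise defer the $t_3\ne 0$ branch of (b) to that matching. What you add is a genuine, self-contained verification of part (a) and of the first two cases of part (b), and these check out. In (a), the $s_1$-sum does produce $q\,\delta_{A,0}$ with $A=t_3(a_8s_2+a_9s_4)+a_8t_6+a_9t_7$; for $t_3=0$ the third condition $a_9t_5+a_{10}t_6=0$ is indeed the combination $a_8^{-1}\bigl(a_{10}(a_8t_6+a_9t_7)+a_9(a_8t_5+a_{10}t_7)\bigr)$ of the other two in characteristic $2$; and for $t_3\ne 0$ one computes $\alpha=a_9a_{10}t_3/a_8\ne 0$ and, as you assert, $\beta=0$ (the cross terms cancel in pairs), so the value vanishes because $s\mapsto s^2$ is a bijection --- exactly the ``quadratic Gauss sums vanish'' phenomenon the paper alludes to. Your normality argument for $\bar V$ and the resulting vanishing statements are fine, the pairwise cancellation when the $X_5X_6X_7$-coordinates are proportional to $(a_{10},a_9,a_8)$ is correct and yields the factor $q^3/2$ with the phase $\phi(a_8a_9a_{10}at)$, and the identity $\sum_s\phi(\alpha s^2+\beta s+\gamma)=q\,\phi(\gamma)\,\delta_{\beta^2,\alpha}$ is valid for the trace character via $\Tr(\alpha s^2)=\Tr(\alpha^{1/2}s)$. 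The only unfinished business --- extracting $\alpha,\beta,\gamma$ explicitly in the $t_3\ne0$ case of (b) and reconciling them with the constraint $\delta_{t_3,\,a^{-p}/(a_8a_9a_{10})}$ and the displayed phase --- is precisely the work the paper also leaves to the citation, so your write-up is if anything more complete than the original.
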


\subsection{The Family \texorpdfstring{$\cF_{11}$}{F11}}
First we note that $M_{12}$ lies in the kernel of the characters in $\cF_{11}$.
We have that $U/M_{12} = \prod_{i = 1}^{11} X_i$ and we define the normal subgroup
$$
V = \prod_{i \ne 1,2,4} X_i
$$
of $U/M_{12}$.
We have that $V$ is isomorphic
to $\F_q^8$ and $X = X_1X_2X_4$ is a transversal of $V$ in $U/M_{12}$.
We define the
character $\lambda^{a_{11},b_5,b_6,b_7}$ of $V$ by
$$
\lambda^{a_{11},b_5,b_6,b_7}(x_V(\bt))
= \phi(b_5t_5+b_6t_6+b_7t_7+a_{11} t_{11})
$$
for $a_{11} \in \F_q^\times$ and $b_5,b_6,b_7 \in \F_q$.

We define $\chi_{11}^{a_{11},b_5,b_6,b_7}$ to be the character obtained by inducing
$\lambda^{a_{11},b_5,b_6,b_7}$ to $U/M_{12}$.
The irreducible characters $\chi_{11}^{a_{11},b_5,b_6,b_7,b_3}$
in the family $\cF_{11}$
are then obtained by tensoring $\chi_{11}^{a_{11},b_5,b_6,b_7}$ with the
linear character $\chi_{\lin}^{0,0,b_3,0}$.

Using \cite[Lemma 1.5]{LM} with $Z=X_{11}$, $Y=X_8X_9X_{10}$, $X=X_1X_2X_4$ and $M=X_3X_5X_6X_7$,
we obtain that each $\chi_{11}^{a_{11},b_5,b_6,b_7,b_3}$ is irreducible, and that  $\chi_{11}^{a_{11},b_5,b_6,b_7,b_3}
= \chi_{11}^{a'_{11},b'_5,b'_6,b'_7,b'_3}$
if and only if $(a_{11},b_5,b_6,b_7,b_3) = (a'_{11},b'_5,b'_6,b'_7,b'_3)$.
The fact that the hypothesis of that lemma holds follows from the fact that $XYZ$ forms a subgroup of $V$
which is special of type $q^{1+6}$.

In order to obtain the values of $\chi_{11}^{a_{11},b_5,b_6,b_7,b_3}$ in Proposition \ref{P:11} below, we take advantage of the conjugacy class
representatives in Table \ref{tab:representatives}.
We assume $p>2$ at first, as
the situation is slightly different for $p=2$, which we discuss after.
First note that
since $\chi_{11}^{a_{11},b_5,b_6,b_7,b_3}(x(\bt)) = 0$ for $x(\bt) \not\in V$, we can deal with
any conjugacy classes in Table \ref{tab:representatives} above the family $\cC_3$ quickly in (a).
Dealing with elements in the family $\cC_3$ in (b) is the most complicated part of the proposition.  Part (c) covers all
of those in the families $\cC_{5,6,7}$, $\cC_{5,6}$ and $\cC_{5}$, and then we can
then use the triality automorphism $\tau$ to determine the values
on representatives in the classes $\cC_{5,7}$, $\cC_{6,7}$, $\cC_{6}$ and $\cC_{7}$.
We finish by considering the remaining classes together in (d), so that we have
determined the value of $\chi_{11}^{a_{11},b_5,b_6,b_7,b_3}$.

\begin{proposition} \label{P:11}
Suppose $p > 2$. \\
{\em (a)} Suppose that at least one of $t_1$, $t_2$ or $t_4$ is nonzero.  Then
$$
\chi_{11}^{a_{11},b_5,b_6,b_7,b_3}(x(\bt)) = 0.
$$
{\em (b)} Let $x(\bt) = x_3(t_3)x_8(t_8)x_9(t_9)x_{10}(t_{10})x_{11}(t_{11})$, where $t_3 \ne 0$.  Then
\begin{multline*}
\chi_{11}^{a_{11},b_5,b_6,b_7,b_3}(x(\bt)) =  q\phi(b_3t_3+a_{11}t_{11})(q\delta_{(b_6t_3+a_{11}t_9,b_7t_3+a_{11}t_8),0}) \\ \sum_{s\in\F_q^\times}\phi(-(b_5t_3+a_{11}t_{10})s + (b_6t_3+a_{11}t_9)(b_7t_3+a_{11}t_8)(a_{11}t_3)^{-1}s^{-1})).
\end{multline*}
{\em (c)}
\begin{itemize}
\item[(i)] Let $x(\bt) = x_5(t_5)x_6(t_6)x_7(t_7)x_{11}(t_{11})$, where $t_5,t_6,t_7 \ne 0$.  Then
$$
\chi_{11}^{a_{11},b_5,b_6,b_7,b_3}(x(\bt)) = q\phi(b_5t_5+b_6t_6+b_7t_7+a_{11}t_{11})\sum_{s \in \F_q} \phi(-a_{11}t_5t_6t_7s^2).
$$
\item[(ii)] Let $x(\bt) = x_5(t_5)x_6(t_6)x_{9}(t_9)x_{10}(t_{10})x_{11}(t_{11})$, where $t_5 \ne 0$. Then
$$
\chi_{11}^{a_{11},b_5,b_6,b_7,b_3}(x(\bt)) =  q^2\delta_{t_6t_9,t_5t_{10}}\ \phi(b_5t_5+b_6t_6+a_{11}t_{11}).
$$
\end{itemize}
{\em (d)} Let $x(\bt) = x_8(t_8)x_{9}(t_9)x_{10}(t_{10})x_{11}(t_{11})$. Then
$$
\chi_{11}^{a_{11},b_5,b_6,b_7,b_3} =  q^3\delta_{(t_8,t_9,t_{10}),0}\phi(a_{11}t_{11}).
$$
\end{proposition}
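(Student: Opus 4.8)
The plan is to compute $\chi := \chi_{11}^{a_{11},b_5,b_6,b_7,b_3}$ as an induced character from the linear character $\lambda^{a_{11},b_5,b_6,b_7}$ of $V = \prod_{i\ne 1,2,4}X_i$ in $U/M_{12}$, using $X = X_1X_2X_4$ as a transversal. As in the proofs of Propositions \ref{P:567}, \ref{P:89} and \ref{P:8910}, the first reduction is to note that tensoring with $\chi_{\lin}^{0,0,b_3,0}$ only introduces the factor $\phi(b_3t_3)$, so it suffices to treat the case $b_3 = 0$; write $\lambda$ for $\lambda^{a_{11},b_5,b_6,b_7}$. Since $\chi(x(\bt)) = 0$ whenever $x(\bt)\notin V$, part (a) is immediate: if one of $t_1,t_2,t_4$ is nonzero then $x(\bt)$ is not in $V$ (after passing to $U/M_{12}$), giving $\chi(x(\bt)) = 0$. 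For the remaining parts one evaluates $\chi$ on the conjugacy class representatives from Table \ref{tab:representatives} lying in $V$, and the key computational input throughout is to conjugate the relevant representative by $x_1(s_1)x_2(s_2)x_4(s_4)$ in $U/M_{12}$, read off the resulting element using the commutator relations in Table \ref{tab:commrelD4}, apply $\lambda$, and then sum over $s_1,s_2,s_4\in\F_q$. Since $Z(U/M_{12}) = X_8X_9X_{10}X_{11}$, it suffices in each case to compute $\chi$ on the "core" part of the representative (the factors outside the centre) and then multiply by the central character value $\phi(a_{11}t_{11})$, and likewise $\phi(b_5t_5+b_6t_6+b_7t_7)$ where those coordinates survive.

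For part (b), with $x(\bt) = x_3(t_3)x_8(t_8)x_9(t_9)x_{10}(t_{10})x_{11}(t_{11})$ and $t_3\ne 0$, I would first work out the conjugate $x_3(t_3)^{x_1(s_1)x_2(s_2)x_4(s_4)}$ in $U/M_{12}$: from the commutators $[x_1,x_3]=x_5$, $[x_2,x_3]=x_6$, $[x_3,x_4]=x_7^{-1}$, together with the degree-three and degree-four relations pushing into $X_8,X_9,X_{10},X_{11}$, this produces a product of root elements in $X_5,\dots,X_{11}$ with arguments that are (at most) quadratic in $s_1,s_2,s_4$ times $t_3$. Feeding this into $\lambda$ and using the presence of $x_8(t_8)x_9(t_9)x_{10}(t_{10})$ gives a sum over $s_1,s_2,s_4$ of $\phi$ of a quadratic form in those variables (with coefficients involving $t_3,t_8,t_9,t_{10},b_5,b_6,b_7,a_{11}$). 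Summing over one variable, say the one entering linearly, produces the Kronecker deltas $\delta_{(b_6t_3+a_{11}t_9,\,b_7t_3+a_{11}t_8),0}$ together with a factor $q$; the remaining sum is over $s\in\F_q$ of $\phi$ of an expression of the shape $\alpha s + \beta s^{-1}$ after the delta conditions are imposed — this is where the $s^{-1}$ term and the restriction to $s\in\F_q^\times$ in the stated formula come from (a Kloosterman-type sum), once one accounts carefully for the $s=0$ contribution. Assembling the pieces and reinstating the central factor $\phi(b_3t_3+a_{11}t_{11})$ yields (b). I expect this to be the main obstacle: one must track the quadratic dependence on the $s_i$ through several commutator relations without sign or bookkeeping errors, and correctly separate the $s=0$ term to land on the precise Kloosterman sum over $\F_q^\times$.

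Parts (c) and (d) are then lighter variants of the same mechanism. For (c)(i), with $x(\bt) = x_5(t_5)x_6(t_6)x_7(t_7)x_{11}(t_{11})$ and $t_5,t_6,t_7\ne 0$, conjugating by $x_1(s_1)x_2(s_2)x_4(s_4)$ moves $x_5,x_6,x_7$ into $X_8,X_9,X_{10}$ (via $[x_1,x_6]=x_8$, $[x_2,x_5]=x_8$, etc.) and into $X_{11}$; the triple sum collapses through the same "two linear conditions plus a leftover square" pattern as in Proposition \ref{P:8910}(b), producing the quadratic Gauss sum $\sum_{s\in\F_q}\phi(-a_{11}t_5t_6t_7s^2)$. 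For (c)(ii), with the representative $x_5(t_5)x_6(t_6)x_9(t_9)x_{10}(t_{10})x_{11}(t_{11})$ and $t_5\ne 0$, the same conjugation yields two independent linear conditions on the summation variables, giving $q^2\delta_{t_6t_9,\,t_5t_{10}}$ after eliminating; one then appeals to the triality automorphism $\tau$ (acting on subscripts by $\rho$) to obtain the analogous values on the $\cC_{5,7},\cC_{6,7},\cC_6,\cC_7$ families, as indicated in the excerpt. Finally, for (d) the representative $x_8(t_8)x_9(t_9)x_{10}(t_{10})x_{11}(t_{11})$ lies in $Z(U/M_{12})$, so conjugation by the transversal is trivial and $\chi$ is just $q^3$ times the restriction of $\lambda$ to the centre, which is nonzero precisely when $t_8=t_9=t_{10}=0$, giving $q^3\delta_{(t_8,t_9,t_{10}),0}\phi(a_{11}t_{11})$. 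Throughout, I would state explicitly each conjugation formula in $U/M_{12}$ before summing, as that is the only genuinely error-prone ingredient.
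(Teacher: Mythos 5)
Your overall strategy (induce $\lambda^{a_{11},b_5,b_6,b_7}$ from $V=\prod_{i\ne1,2,4}X_i$ over the transversal $X_1X_2X_4$, reduce to $b_3=0$, and evaluate on the class representatives by explicit conjugation) is the paper's strategy, and your treatment of (a) and the broad shape of (b) and (c) are sound. But there is a genuine error at the foundation of your reduction: $Z(U/M_{12})$ is \emph{not} $X_8X_9X_{10}X_{11}$; it is $X_{11}$ alone, since the relations $[x_4(t),x_8(u)]=x_{11}(tu)$, $[x_2(t),x_9(u)]=x_{11}(tu)$ and $[x_1(t),x_{10}(u)]=x_{11}(tu)$ survive in $U/M_{12}$. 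Consequently your argument for (d) collapses: $x_8(t_8)x_9(t_9)x_{10}(t_{10})x_{11}(t_{11})$ is not central, conjugation by the transversal is not trivial, and ``the restriction of $\lambda$ to the centre'' cannot produce $\delta_{(t_8,t_9,t_{10}),0}$ because $\lambda^{a_{11},b_5,b_6,b_7}$ does not depend on $t_8,t_9,t_{10}$ at all. The correct mechanism is that conjugation by $x_1(s_1)x_2(s_2)x_4(s_4)$ shifts the $X_{11}$-coordinate by $-(s_4t_8+s_2t_9+s_1t_{10})$, and summing $\phi(-a_{11}(s_4t_8+s_2t_9+s_1t_{10}))$ over $(s_1,s_2,s_4)\in\F_q^3$ yields $q^3\delta_{(t_8,t_9,t_{10}),0}$. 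The same non-centrality is what produces the terms $a_{11}(t_9s_2+t_{10}s_1+t_8s_4)$ in the exponent in part (b); if you pulled $x_8,x_9,x_{10}$ out as ``central'' there, as your stated reduction prescribes, the formula in (b) would come out wrong, so this claim also contradicts your own (otherwise reasonable) description of that computation.

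Two further points. In (b) the exponent is not a quadratic form in $(s_1,s_2,s_4)$: the $X_{11}$-coordinate of $x_3(t_3)^{x_1(s_1)x_2(s_2)x_4(s_4)}$ contains the cubic term $-t_3s_1s_2s_4$, and it is exactly this term which, after summing over $s_4$ (which enters only linearly) and solving the resulting condition $b_7t_3+a_{11}t_8+a_{11}t_3s_1s_2=0$ for $s_2$ when $s_1\ne0$, produces the $s^{-1}$ in the Kloosterman sum; the $s_1=0$ stratum contributes the separate term $q\delta_{(b_6t_3+a_{11}t_9,b_7t_3+a_{11}t_8),0}$, so the final answer is a \emph{sum} of the delta term and the Kloosterman sum, not a product (your wording leaves this ambiguous). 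Finally, for (c) your direct conjugation by the full transversal is a legitimate alternative to the paper's route: the paper instead first induces to $W=\prod_{i\ne1}X_i$, obtaining $\lambda^W(\cdots)=q\phi(b_5t_5+b_6t_6+b_7t_7+a_{11}(-t_5^{-1}t_8t_9+t_{11}))$ for $t_5\ne0$, and then inducts over $X_1$; both methods give (c)(i) and (c)(ii), and yours is closer in spirit to the proof of Proposition \ref{P:8910}.
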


\begin{remark}
Before the proof, we comment on the term of the form
$\sum_{s \in \F_q} \phi(As+\frac{B}{s})$, where $A,B \in \F_q^\times$ in (b).
It seems that a general formula for this is not known.
\end{remark}

\begin{proof}
We note that it suffices to deal with the case $b_3=0$ and we
let $\lambda = \lambda^{a_{11},b_5,b_6,b_7}$ and $\chi = \chi^{a_{11},b_5,b_6,b_7}$.

(a) We dealt with this case in the discussion before the statement of the proposition.

(b) First we note that $Z(U/M_{12}) = X_{11}$.  Therefore, it suffices to calculate the values of
$\chi(x_3(t_3)x_8(t_8)x_9(t_9)x_{10}(t_{10}))$.  We calculate
\begin{multline*}
(x_3(t_3)x_8(t_8)x_9(t_9)x_{10}(t_{10}))^{x_1(s_1)x_2(s_2)x_4(s_4)} = \\
x_3(t_3)x_5(-t_3s_1)x_6(-t_3s_2)x_7(-t_3s_4) x_8(t_8+t_3s_1s_2) \\
x_9(t_9+t_3s_1s_4)x_{10}(t_{10}+t_3s_2s_4)
x_{11}(-t_9s_2-t_{10}s_1-t_8s_4-t_3s_1s_2s_4)).
\end{multline*}
Thus we see that $\chi(x_3(t_3)x_8(t_8)x_9(t_9)x_{10}(t_{10}))$ is equal to
\begin{align*}
& \: \sum_{s_1,s_2,s_4\in\F_q}\phi(-b_5t_3s_1-b_6t_3s_2-b_7t_3s_4+a_{11}(-t_9s_2-t_{10}s_1-t_8s_4-t_3s_1s_2s_4)) \\
=& \: \sum_{s_1,s_2\in\F_q}\phi(-b_5t_3s_1-b_6t_3s_2+a_{11}(-t_9s_2-t_{10}s_1)) \sum_{s_4\in\F_q}\phi(-(b_7t_3+a_{11}t_8+a_{11}t_3s_1s_2)s_4) \\
=& \: \sum_{s_1,s_2\in\F_q}\phi(-b_5t_3s_1-b_6t_3s_2+a_{11}(-t_9s_2-t_{10}s_1)) q\delta_{b_7t_3+a_{11}t_8+a_{11}t_3s_1s_2,0}\\
=& \: q \left(\sum_{s_2\in\F_q}\phi((-b_6t_3-a_{11}t_9)s_2) \delta_{b_7t_3+a_{11}t_8,0} \right.\\
& \: \left.+\sum_{s_1\in\F_q^\times}\phi(-b_5t_3s_1-a_{11}t_{10}s_1)\sum_{s_2\in\F_q}\phi((-b_6t_3-a_{11}t_9)s_2)
\delta_{b_7t_3+a_{11}t_8+a_{11}t_3s_1s_2,0}\right) \\
=& \:q\left( \phantom{\sum_{a}}\hspace{-0.2in} q\delta_{(b_7t_3+a_{11}t_8,b_6t_3+a_{11}t_9),0} \right.\\
& \: \left. + \sum_{s_1\in\F_q^\times}\phi(-(b_5t_3+a_{11}t_{10})s_1)
\phi((b_6t_3+a_{11}t_9)(b_7t_3+a_{11}t_8)(a_{11}t_3)^{-1}s_1^{-1})\right) \\
=& \: q(q\delta_{(b_7t_3+a_{11}t_8,b_6t_3+a_{11}t_9),0} \\
& \:  + \sum_{s \in\F_q^\times}\phi(-(b_5t_3+a_{11}t_{10})s+(b_6t_3+a_{11}t_9)(b_7t_3+a_{11}t_8)(a_{11}t_3)^{-1}s^{-1})).
\end{align*}
This gives the formula in (b).

(c)
We move on to consider the case where $t_3 = 0$ and $t_5 \ne 0$.  Consider the normal subgroup
$W= \prod_{i \ne 1} X_i$ of $U/M_{12}$.  A calculation shows that
\begin{multline*}
\lambda^W(x_5(t_5)x_6(t_6)x_7(t_7)x_8(t_8)x_9(t_9)x_{10}(t_{10})x_{11}(t_{11})) = \\
q\phi(b_5t_5+b_6t_6+b_7t_7+a_{11}(-t_5^{-1}t_8t_9+t_{11})).
\end{multline*}
To obtain $\chi(x_5(t_5)x_6(t_6)x_7(t_7)x_8(t_8)x_9(t_9)x_{10}(t_{10})x_{11}(t_{11}))$
we just have to induce this formula over $X_1$ to $U/M_{12}$.
First we consider
elements of the conjugacy class $\cC_{5,6,7}$, and we get
\begin{align*}
& \: \chi(x_5(t_5)x_6(t_6)x_7(t_7)x_{11}(t_{11})) \\
= & \: q\phi(b_5t_5+b_6t_6+b_7t_7+a_{11}t_{11})\sum_{s_1 \in \F_q} \phi(-a_{11}t_5^{-1}(-s_1t_6)(-s_1t_7)) \\
= & \: q\phi(b_5t_5+b_6t_6+b_7t_7+a_{11}t_{11})\sum_{s \in \F_q} \phi(-a_{11}t_5t_6t_7s^2)
\end{align*}
as stated in (i).

Next we consider elements in the class $\cC_{5,6}$ or $\cC_5$ and we get
\begin{align*}
& \: \chi(x_5(t_5)x_6(t_6)x_{9}(t_9)x_{10}(t_{10})x_{11}(t_{11})) \\
= & \: q\phi(b_5t_5+b_6t_6+a_{11}t_{11})\sum_{s_1 \in \F_q} \phi(-a_{11}t_5^{-1}((-s_1t_6)t_9-s_1t_{10})) \\
= & \:  q\phi(b_5t_5+b_6t_6+b_7t_7+a_{11}t_{11})\sum_{s \in \F_q} \phi(a_{11}t_5^{-1}s_1(t_6t_9+t_{10})) \\
= & \: q^2\delta_{t_6t_9+t_{10},0}\phi(b_5t_5+b_6t_6+b_7t_7+a_{11}t_{11})
\end{align*}
as stated in (ii).

(d) This is an easy calculation, which we omit.
\end{proof}

In the case $p=2$,
the only difference is that we now have to consider elements of the form $x_5(t_5)x_6(t_6)x_7(t_7)x_{10}(t_{10})x_{11}(t_{11})$ even when
$t_5,t_6,t_7 \ne 0$, to cover the family of conjugacy classes $\cC_{5,6,7,2q^8}^{p=2}$;
the value given below is in a form where we do not have a closed formula.  Also the value for the family $\cC_{5,6,7,q^8}^{p=2}$
is simplified as the quadratic Gauss sum is equal to 0.

\begin{lemma} \label{l:11p2}
Let $p=2$. \\
{\em (a)} Let $x(\bt) = x_5(t_5)x_6(t_6)x_7(t_7)x_{10}(t_{10})x_{11}(t_{11})$, where $t_5,t_6,t_7,t_{10} \in \F_q^\times$ and $t_{11} \in \F_q$.
Then
$$
\chi_{11}^{a_{11},b_5,b_6,b_7,b_3}(x(\bt)) = q\phi(b_5t_5+b_6t_6+b_7t_7+a_{11}t_{11})\sum_{s \in \F_q} \phi(a_{11}(t_5t_6t_7s^2+t_5t_{10}s)).
$$
{\em (b)} Let $x(\bt) = x_5(t_5)x_6(t_6)x_7(t_7)$, where $t_5,t_6,t_7 \in \F_q^\times$.
Then
$$
\chi_{11}^{a_{11},b_5,b_6,b_7,b_3}(x(\bt)) = 0.
$$
\end{lemma}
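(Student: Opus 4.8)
The plan is to run the argument of Proposition~\ref{P:11}, reusing its intermediate computation and then specialising to $p = 2$. Both elements in the statement have $t_3 = 0$, and since $\chi_{11}^{a_{11},b_5,b_6,b_7,b_3} = \chi_{11}^{a_{11},b_5,b_6,b_7}\otimes\chi_{\lin}^{0,0,b_3,0}$ with $\chi_{\lin}^{0,0,b_3,0}(x(\bt)) = \phi(b_3t_3)$, it suffices to treat $b_3 = 0$; write $\lambda = \lambda^{a_{11},b_5,b_6,b_7}$ and $\chi = \chi_{11}^{a_{11},b_5,b_6,b_7}$. Recall that $\chi$ is the character of $U/M_{12}$ obtained by inducing $\lambda$ from $V = \prod_{i\ne 1,2,4}X_i$, that this induction factors through the normal subgroup $W = \prod_{i\ne 1}X_i$, and that $X_1$ is a transversal of $W$ in $U/M_{12}$. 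The one ingredient we take from the proof of Proposition~\ref{P:11}(c) is the formula
$$
\lambda^W(x_5(t_5)x_6(t_6)x_7(t_7)x_8(t_8)x_9(t_9)x_{10}(t_{10})x_{11}(t_{11})) = q\phi(b_5t_5+b_6t_6+b_7t_7+a_{11}(-t_5^{-1}t_8t_9+t_{11})),
$$
valid when $t_5\ne 0$; its derivation uses only the commutator relations and $\sum_{s\in\F_q}\phi(s) = 0$, so it holds verbatim for $p = 2$.

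For part (a), put $g = x_5(t_5)x_6(t_6)x_7(t_7)x_{10}(t_{10})x_{11}(t_{11})$ with $t_5,t_6,t_7,t_{10}\in\F_q^\times$. Conjugating by $x_1(s_1)$ and using $[x_1(s_1),x_6(t_6)] = x_8(s_1t_6)$, $[x_1(s_1),x_7(t_7)] = x_9(s_1t_7)$ and $[x_1(s_1),x_{10}(t_{10})] = x_{11}(s_1t_{10})$, together with the fact that modulo $M_{12}$ the root subgroups $X_8,X_9,X_{10},X_{11}$ are central, one computes
$$
g^{x_1(s_1)} \equiv x_5(t_5)x_6(t_6)x_7(t_7)x_8(-s_1t_6)x_9(-s_1t_7)x_{10}(t_{10})x_{11}(t_{11}-s_1t_{10})\pmod{M_{12}},
$$
which lies in $W$. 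Substituting into the formula for $\lambda^W$ and summing $\chi(g) = \sum_{s_1\in\F_q}\lambda^W(g^{x_1(s_1)})$ gives
$$
\chi(g) = q\phi(b_5t_5+b_6t_6+b_7t_7+a_{11}t_{11})\sum_{s_1\in\F_q}\phi(-a_{11}t_5^{-1}t_6t_7s_1^2 - a_{11}t_{10}s_1).
$$
The substitution $s_1 = t_5s$ together with $-1 = 1$ in $\F_q$ then produces the claimed expression.

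For part (b), the same calculation with $t_{10} = t_{11} = 0$ yields $\chi(x_5(t_5)x_6(t_6)x_7(t_7)) = q\phi(b_5t_5+b_6t_6+b_7t_7)\sum_{s\in\F_q}\phi(-a_{11}t_5t_6t_7s^2)$, which is the $p=2$ analogue of Proposition~\ref{P:11}(c)(i). Since $p = 2$ the Frobenius map $s\mapsto s^2$ is a bijection of $\F_q$, so this sum equals $\sum_{s\in\F_q}\phi(-a_{11}t_5t_6t_7s) = 0$ because $a_{11}t_5t_6t_7\in\F_q^\times$; hence $\chi(x_5(t_5)x_6(t_6)x_7(t_7)) = 0$.

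None of the steps is hard: the main points requiring attention are checking that the $\lambda^W$ formula from the $p>2$ proof transfers to $p = 2$ (it does, as noted) and the sign bookkeeping, which collapses in characteristic $2$. Conceptually, the content of the lemma is that the quadratic Gauss sum governing the $p>2$ values of $\cF_{11}$ on $\cC_{5,6,7}$ degenerates: in part (b) it simply vanishes, while in part (a) the extra linear term $a_{11}t_5t_{10}s$ coming from the $x_{10}$ component of the new conjugacy class $\cC_{5,6,7,2q^8}^{p=2}$ survives, leaving a sum with no closed form in general.
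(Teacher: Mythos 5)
Your proposal is correct and follows essentially the same route as the paper: reuse the intermediate formula for $\lambda^W$ from the proof of Proposition~\ref{P:11}(c), induce over $X_1$ with the substitution $s_1 = t_5 s$ for part (a), and for part (b) observe that the quadratic sum vanishes because $s \mapsto s^2$ is a bijection of $\F_q$ in characteristic $2$. The only cosmetic difference is that you carry the $p>2$ signs through and collapse them at the end, whereas the paper drops them from the outset.
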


\begin{proof}
(a) We can proceed as in the case $p>2$ and we obtain the formula
\begin{multline*}
\lambda^W(x_5(t_5)x_6(t_6)x_7(t_7)x_8(t_8)x_9(t_9)x_{10}(t_{10})x_{11}(t_{11})) = \\
q\phi(b_5t_5+b_6t_6+b_7t_7+a_{11}(t_5^{-1}t_8t_9+t_{11})).
\end{multline*}
Now we induce over $X_1$ to obtain
\begin{align*}
& \hspace{-2cm} \chi(x_5(t_5)x_6(t_6)x_7(t_7)x_{10}(t_{10})x_{11}(t_{11})) = \\
 & \: \sum_{s_1 \in \F_q} \lambda^W(x_5(t_5)x_6(t_6)x_7(t_7)x_8(s_1t_6)x_9(s_1t_7)x_{10}(t_{10})x_{11}(s_1t_{10}+t_{11})) \\
= & \: \sum_{s_1 \in \F_q} q\phi(b_5t_5+b_6t_6+b_7t_7+a_{11}(t_5^{-1}s_1^2t_6t_7+s_1t_{10}+t_{11}) \\
= & \: q\phi(b_5t_5+b_6t_6+b_7t_7 + a_{11}t_{11})\sum_{s \in \F_q} \phi(a_{11}(t_5t_6t_7s^2+t_5t_{10}s)).
\end{align*}

(b) This is obtained directly from the calculation for $p > 2$ by noting that the sum there is equal to 0.
\end{proof}

\subsection{The Family \texorpdfstring{$\cF_{12}$}{F12}}
Let
$V =
\prod_{i \ne 3,5,6,7} X_i$ and $W= X_8X_9X_{10}X_{11}$.
Then $W$ is a normal subgroup of $V$ and the
quotient $V/W$ is elementary abelian isomorphic to $\F_q^4$.  Also
$X = X_3X_5X_6X_7$ is a transversal of $V$ in $U$.
Given $a_{12} \in \F_q^\times$, we define the linear character
$\lambda^{a_{12}} : V \to \C$ by $\lambda^{a_{12}}(x_{V}(\bt))= \phi(a_{12}t_{12})$.  Then we define
$\chi_{12}^{a_{12}} = (\lambda^{a_{12}})^U$ and
$\chi_{12}^{a_{12},b_1,b_2,b_4} = \chi_{12}^{a_{12}}
\chi_{\lin}^{b_1,b_2,0,b_4}$.

Using \cite[Lemma 1.5]{LM} with $Z=X_{12}$, $Y=X_8X_9X_{10}X_{11}$, $X=X_3X_5X_6X_7$ and $M=X_1X_2X_4$,
we obtain that each $\chi_{11}^{a_{12},b_1,b_2,b_4}$ is irreducible, and that $\chi_{11}^{a_{12},b_1,b_2,b_4} = \chi_{11}^{a'_{12},b'_1,b'_2,b'_4}$
if and only if $(a_{12},b_1,b_2,b_4) = (a'_{12},b'_1,b'_2,b'_4)$.
The hypothesis of that lemma holds due to the fact that $XYZ$ forms a subgroup of $V$
which is special of type $q^{1+8}$.

In the next proposition we state the values of $\chi_{12}^{a_{12},b_1,b_2,b_4}$
by considering a number of cases, which cover all of the conjugacy class representatives
in Table \ref{tab:representatives}.  We note that in (c) we only deal with
conjugacy classes up to the action of $\tau$.  We first restrict to the case $p > 2$, and then afterwards we
describe some differences that occur for the case $p=2$.

\begin{proposition} \label{P:12}
Suppose that $p > 2$.

\noindent
{\em (a)} Suppose that $t_3 \ne 0$.  Then
$$
\chi_{12}^{a_{12},b_1,b_2,b_4}(x(\bt)) = 0.
$$

\noindent
{\em (b)} Suppose $x(\bt)$ lies in one of the families $\cC_{1,2,4,q^6}$,
$\cC_{1,2,q^6}$, $\cC_{1,4,q^6}$, $\cC_{2,4,q^6}$, $\cC_{1,q^6}$,
$\cC_{2,q^6}$ or $\cC_{4,q^6}$. Then
$$
\chi_{12}^{a_{12},b_1,b_2,b_4}(x(\bt)) = 0.
$$

\noindent
{\em (c)} The values of $\chi_{12}^{a_{12},b_1,b_2,b_4}$ on the families of conjugacy classes $\cC_{1,2,4,q^7}$, $\cC_{1,2,q^7}$,
$\cC_{1,2,q^8}$, $\cC_{1,q^7}$ and $\cC_{1,q^8}$ are given as follows.
\begin{enumerate}
\item[$\cC_{1,2,4,q^7}$:]\ $\chi_{12}^{a_{12},b_1,b_2,b_4}(x_1(t_1)x_2(t_2)x_4(t_4)x_{12}(t_{12}))$ \\
$\displaystyle = q\phi(b_1t_1+b_2t_2+b_4t_4+a_{12}t_{12})\sum_{s \in \F_q} \phi(- a_{12}t_1t_2
t_4s^2)$, for $t_1,t_2,t_4 \in \F_q^\times$ and $t_{12} \in \F_q$.
\item[$\cC_{1,2,q^7}$:]\ $\chi_{12}^{a_{12},b_1,b_2,b_4}(x_1(t_1)x_2(t_2)x_9(t_9)x_{10}(t_{10}))
= q^2 \delta_{0,t_{10}{t_1} -  t_2 t_9}\phi(b_1t_1+b_2t_2)$, \\
for $t_1,t_2,t_9,t_{10} \in \F_q^\times$.
\item[$\cC_{1,2,q^8}$:]\ $\chi_{12}^{a_{12},b_1,b_2,b_4}(x_1(t_1)x_2(t_2)x_{12}(t_{12}))
= q^2 \phi(b_1t_1+b_2t_2+a_{12}t_{12})$, \\
for $t_1,t_2 \in \F_q^\times$ and $t_{12} \in \F_q$.
\item[$\cC_{1,q^7}$:]\ $\chi_{12}^{a_{12},b_1,b_2,b_4}(x_1(t_1)x_{10}(t_{10}))
= 0$, for $t_1,t_{10} \in \F_q^\times$.
\item[$\cC_{1,q^8}$:]\ $\chi_{12}^{a_{12},b_1,b_2,b_4}(x_1(t_1)x_{12}(t_{12}))
= q^2 \phi(b_1t_1+a_{12}t_{12})$, for $t_1\in \F_q^\times$ and $t_{12} \in \F_q$.
\end{enumerate}

\noindent
{\em (d)} Let $x(\bt) = \prod_{i \ge 5} x_i(t_i)$.  Then
$$\chi_{12}^{a_{12},b_1,b_2,b_4}(x(\bt)) = q^4
\delta_{0,(t_5,t_6,t_7,t_8,t_9,t_{10},t_{11})} \phi(a_{12}t_{12}).
$$
\end{proposition}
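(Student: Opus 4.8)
The plan is to compute $\chi_{12}^{a_{12}} := (\lambda^{a_{12}})^U$ and then to obtain $\chi_{12}^{a_{12},b_1,b_2,b_4}$ by multiplying through by the linear character $\chi_{\lin}^{b_1,b_2,0,b_4}$, which by \S\ref{ss:lin} takes the value $\phi(b_1t_1+b_2t_2+b_4t_4)$ on $x(\bt)$; thus the parameters $b_1,b_2,b_4$ only contribute the exponential factor that appears in every formula, and it suffices to treat the case $b_1=b_2=b_4=0$. The one preliminary subtlety is that $V=\prod_{i\ne 3,5,6,7}X_i$ is not normal in $U$. But $\lambda^{a_{12}}$ is a linear character, hence constant on $V$-conjugacy classes, and writing a general $g\in U$ as $g=xv$ with $x\in X$, $v\in V$ (legitimate since $X=X_3X_5X_6X_7$ is a subgroup with $X\cap V=1$) and using $\sum_{v\in V}\lambda^{a_{12}}(v^{-1}wv)=|V|\lambda^{a_{12}}(w)$ shows that nonetheless $\chi_{12}^{a_{12}}(u)=\sum_{x\in X}\dot\lambda^{a_{12}}(u^x)$.

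With this formula the core computation is, for $u=x(\bt)$, to conjugate by a general transversal element $x=x_3(s_3)x_5(s_5)x_6(s_6)x_7(s_7)$, rewrite $u^x$ in the standard form \eqref{e:x(t)} using Table~\ref{tab:commrelD4}, and extract two things: the conditions on $(s_3,s_5,s_6,s_7)$ under which the $X_3,X_5,X_6,X_7$-coordinates of $u^x$ vanish (equivalently $u^x\in V$), and, when they do, the $X_{12}$-coordinate of $u^x$, which determines $\lambda^{a_{12}}(u^x)$. Since $\alpha_3$ is a simple root, no commutator produces an element of $X_3$, so the $X_3$-coordinate of $u^x$ is always $t_3$; this gives part (a) at once. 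When $t_3=0$, the only commutators that create an entry in $X_5$, $X_6$ or $X_7$ are $[x_1,x_3]$, $[x_2,x_3]$, $[x_4,x_3]$, so these coordinates of $u^x$ equal $t_5\pm s_3t_1$, $t_6\pm s_3t_2$, $t_7\pm s_3t_4$, and the condition $u^x\in V$ becomes a constraint on $s_3$ alone.

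For part (b) this is already enough: in each listed family the defining relation among $c_5,c_6,c_7$ together with $t_5\pm s_3t_1=t_6\pm s_3t_2=t_7\pm s_3t_4=0$ forces $s_3=0$ and then $c_5=c_6=c_7=0$, contradicting that the $c_i$ are not all zero, so the sum is empty and the value is $0$. For parts (c) and (d) one finds that $s_3=0$ (except in (d), where $t_1=t_2=t_4=0$ and $s_3$ is free), and then, substituting the appropriate representative from Table~\ref{tab:representatives}, one computes the $X_{12}$-coordinate of $u^x$ as an explicit polynomial in the remaining transversal parameters. Summing $\phi$ of $a_{12}$ times that polynomial over all admissible tuples and applying the orthogonality relation $\sum_{s\in\F_q}\phi(cs)=q\delta_{c,0}$ collapses the linear variables: for $\cC_{1,2,q^8}$, $\cC_{1,q^8}$ and part (d) one is left with a product of Kronecker deltas and a power of $q$; for $\cC_{1,2,q^7}$ two successive applications leave $q^2\delta_{0,t_1t_{10}-t_2t_9}$ (and kill the $a_{12}$-dependence); for $\cC_{1,q^7}$ the surviving factor is $\delta_{a_{12}t_{10},0}=0$; and for $\cC_{1,2,4,q^7}$ the residual sum is that of the nondegenerate quadratic form $a_{12}(t_4s_5s_6+t_2s_5s_7+t_1s_6s_7)$ in $s_5,s_6,s_7$, which after one application of orthogonality and a rescaling becomes $q\sum_{s\in\F_q}\phi(-a_{12}t_1t_2t_4s^2)$. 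The remaining conjugacy-class families $\cC_{1,4,\bullet}$, $\cC_{2,4,\bullet}$, $\cC_{2,\bullet}$, $\cC_{4,\bullet}$ then follow by applying the triality automorphism $\tau$, which fixes $X_3$, $X_{11}$, $X_{12}$ and so fixes $\chi_{12}^{a_{12}}$ while permuting the $b_i$. I expect the main obstacle to be purely organisational: tracking the signs and the many secondary commutator terms generated while putting $u^x$ into standard form, and, in the $\cC_{1,2,4,q^7}$ case, identifying the precise residual quadratic form — in particular that its discriminant is a multiple of $t_1t_2t_4$ — so as to land on the stated Gauss sum.
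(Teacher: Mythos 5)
Your proposal is correct and follows essentially the same route as the paper: both compute $\chi_{12}^{a_{12}}=(\lambda^{a_{12}})^U$ by summing $\dot\lambda^{a_{12}}$ over the transversal $X=X_3X_5X_6X_7$, noting that membership of the conjugate in $V$ is a condition on $s_3$ alone, and collapsing the resulting exponential sums with $\sum_{s}\phi(cs)=q\delta_{c,0}$ so that a genuine quadratic Gauss sum survives only for $\cC_{1,2,4,q^7}$. The only organisational differences are that the paper stages the induction through the intermediate subgroups $V\le VX_7\le VX_6X_7\le VX_5X_6X_7\le U$, recording the key intermediate formula \eqref{e:lambda2} once and reusing it, and that in part (b) it invokes the theory of minimal representatives to see that no $X_3$-conjugate lands in $V$, where you instead check directly that the linear constraints on $s_3$ are incompatible with each family's defining relation (and with $p>2$, resp.\ $p>3$) --- both arguments are valid, and your acknowledged remaining work is indeed only the careful sign-tracking in the commutator bookkeeping.
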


\begin{proof}
It is clear that it suffices to
calculate the values of $\chi_{12}^{a_{12}}$.  We write
$\lambda = \lambda^{a_{12}}$ and $\chi = \chi_{12}^{a_{12}}$.

In order to explain our calculations, we introduce some intermediate
subgroups.  We let $V_1 = VX_7$, $V_2 = VX_6X_7$ and
$V_3 = VX_5X_6X_7$.  To abbreviate notation
we write $\lambda_i =
\lambda^{V_i}$ for each $i=1,2,3$.
By direct calculation we find
$$
\lambda_1(x_{V_1}(\bt))
= q\delta_{(t_7,t_8),0}  \phi(a_{12} t_{12}),$$
and
\begin{equation}\label{e:lambda2}
\lambda_2(x_{V_2}(\bt))
= \left\{ \begin{array}{ll} q \delta_{0,(t_6,t_7)} \phi(a_{12}(t_{12} - \frac{t_9t_8}{t_1})) & \text{if $t_1 \ne 0$}
\\ q^2 \delta_{0,(t_6,t_7,t_8,t_9)} \phi(a_{12}t_{12}) & \text{if $t_1 = 0$.}  \end{array} \right.
\end{equation}
We do not calculate $\lambda_3$ fully here, but do note that
\begin{equation} \label{e:lambda3}
\lambda_3(x_{V_3}(\bt)) =  \delta_{0,(t_5,t_6,t_7)} \lambda_3(x_{V_3}(\bt)).
\end{equation}

(a) It is clear that $\chi(x(\bt)) = 0$, if $t_3 \ne 0$.

(b) Let $x = x_1(c_1)x_2(c_2)x_4(c_4)x_5(e_5)x_6(e_6)x_7(e_7)$ lie in
the family $\cC_{1,2,4,q^6}$ and consider $y = x^{x_3(s_3)}$ for
$s_3 \in \F_q$.  The theory of minimal representatives outlined in
Section \ref{sec:classes} implies that the coefficient of one of
$x_5$, $x_6$ or $x_7$ in $y$ must be nonzero. Therefore,
$
\chi(x) = \sum_{s \in \F_q} \dot \psi_3(x^{x_3(s)}) = 0
$
by \eqref{e:lambda3}.

Similar arguments deal with the other families in the statement.

(c) First we show that if one of $t_1$, $t_2$ or $t_4$ is nonzero, then
\begin{equation} \label{e:red}
\chi\left(\prod_{i \ne 3,5,6,7} x_i(t_i)\right) = \lambda_3\left(\prod_{i \ne 3,5,6,7}
x_i(t_i)\right).
\end{equation}
We just deal with the case $t_1 \ne 0$, as the other cases can be dealt with
similarly.
Consider $y = \prod_{i \ne 3,5,6,7} x_i(t_i)^{x_3(s_3)}$, for $s_3
\in \F_q$. We calculate the coefficient of $x_5$ in $y$ to be $t_1 s_3$.
Therefore, by \eqref{e:lambda3}, the only nonzero
term in the right side of
$$
\chi\left(\prod_{i \ne 3,5,6,7} x_i(t_i)\right) = \sum_{s_3 \in \F_q} \dot
\lambda_3\left(\prod_{i \ne 3,5,6,7} x_i(t_i)^{x_3(s_3)}\right)
$$
is when $s_3 = 0$.

We now give the calculation for $\cC_{1,2,4,q^7}$
\begin{align*}
& \: \chi(x_1(t_1)x_2(t_2)x_4(t_4)x_{12}(t_{12})) \\
=& \: \lambda_3(x_1(t_1)x_2(t_2)x_4(t_4)x_{12}(t_{12})) \\
=& \: \sum_{s_5 \in \F_q} \dot
\lambda_2((x_1(t_1)x_2(t_2)x_4(t_4)x_{12}(t_{12}))^{x_5(s_5)}) \\
=& \:  \sum_{s_5 \in \F_q} \dot \lambda_2(x_1(t_1)x_2(t_2)x_8(t_2s_5)x_4(t_4)x_9(t_4 s_5) x_{12}(t_{12})) \\
=& \: \sum_{s_5 \in \F_q} \dot \lambda_2(x_1(t_1)x_2(t_2)x_4(t_4) x_8(
t_2s_5) x_9(t_4 s_5) x_{11}(-t_2t_4s_5) x_{12}(t_{12})) \\
=& \:  q\phi(a_{12}t_{12}) \sum_{s \in \F_q} \phi(- a_{12}t_1t_2t_4s^2).
\end{align*}
The first equality is given by \eqref{e:red}, and the last uses \eqref{e:lambda2} and
substitution $s = t_1s_5$.

We omit the calculations for the other conjugacy classes in (c), as they are very similar.

(d) This is a straightforward calculation, which we omit.
\end{proof}

For the case $p=2$, we just have to deal with the classes
$\cC_{1,2,4,2q^7}^{p=2}$ and $\cC_{1,2,4,q^7}^{p=2}$, which we do in the following lemma.  The
calculations involved are a minor modification to those for the class
$\cC_{1,2,4}$ for $p>2$, so we omit the proof.

\begin{lemma} \label{l:12p2}
Let $p=2$.  The values of on the classes $\cC_{1,2,4,2q^7}^{p=2}$ and $\cC_{1,2,4,q^7}^{p=2}$
are given as follows.
\begin{itemize}
\item[$\cC_{1,2,4,2q^7}^{p=2}$:]\  $\chi_{12}^{a_{12},b_1,b_2,b_4}(x_1(t_1)x_2(t_2)x_4(t_4)x_{10}(t_{10})x_{12}(t_{12}))
 \\ =q\phi(b_1t_1+b_2t_2+b_4t_4+a_{12}t_{12})\sum_{s \in \F_q} \phi(a_{12}(st_1t_{10} + t_1t_2
t_4s^2))$, for $t_1,t_2,t_4,t_{10} \in \F_q^\times$ and $t_{12} \in \F_q$.
\item[$\cC_{1,2,4,q^7}^{p=2}$:]\  $\chi_{12}^{a_{12},b_1,b_2,b_4}(x_1(t_1)x_2(t_2)x_4(t_4)x_{12}(t_{12}))
= 0$, for $t_1,t_2,t_4 \in \F_q^\times$ and $t_{12} \in \F_q$.
\end{itemize}
\end{lemma}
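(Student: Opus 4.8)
The plan is to follow the proof of Proposition~\ref{P:12}(c) essentially verbatim, tracking one extra commutator term and using the characteristic~$2$ collapse of the quadratic Gauss sum. As there, it suffices to compute the values of $\chi = \chi_{12}^{a_{12}}$, since $\chi_{12}^{a_{12},b_1,b_2,b_4} = \chi_{12}^{a_{12}}\chi_{\lin}^{b_1,b_2,0,b_4}$ and the linear factor merely contributes $\phi(b_1t_1+b_2t_2+b_4t_4)$. I would retain the subgroups $V = \prod_{i \ne 3,5,6,7} X_i$, $V_1 = VX_7$, $V_2 = VX_6X_7$, $V_3 = VX_5X_6X_7$, the characters $\lambda = \lambda^{a_{12}}$ and $\lambda_i = \lambda^{V_i}$, and the formulas \eqref{e:lambda2}, \eqref{e:lambda3}, together with the reduction \eqref{e:red}; all of these hold in any characteristic, and \eqref{e:red} applies in both cases here because $t_1 \in \F_q^\times$.

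For the family $\cC_{1,2,4,2q^7}^{p=2}$, I would start, as in the $\cC_{1,2,4,q^7}$ computation, from $\chi(x_1(t_1)x_2(t_2)x_4(t_4)x_{10}(t_{10})x_{12}(t_{12})) = \lambda_3(x_1(t_1)x_2(t_2)x_4(t_4)x_{10}(t_{10})x_{12}(t_{12}))$ via \eqref{e:red}, and then write $\lambda_3$ as $\sum_{s_5 \in \F_q} \lambda_2$ evaluated on the $x_5(s_5)$-conjugate. The only new ingredient compared with Proposition~\ref{P:12}(c) is the factor $x_{10}(t_{10})$: conjugating it by $x_5(s_5)$ produces, via the relation $[x_5(t),x_{10}(u)] = x_{12}(-tu)$ of Table~\ref{tab:commrelD4}, an extra $x_{12}(\pm t_{10}s_5)$ factor, the sign being immaterial since $p = 2$; the remaining $x_8(t_2 s_5)$, $x_9(t_4 s_5)$ and the resulting $x_{11}$-term arise exactly as before, and the $X_{11}$-contribution is absorbed because $\lambda$ is trivial on $X_{11}$. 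Applying \eqref{e:lambda2} in the case $t_1 \ne 0$ then gives $q\sum_{s_5 \in \F_q} \phi\bigl(a_{12}(t_{12} + t_{10}s_5 + t_1^{-1}t_2t_4 s_5^2)\bigr)$, and the substitution $s = t_1^{-1}s_5$ yields $q\phi(a_{12}t_{12})\sum_{s \in \F_q} \phi(a_{12}(st_1t_{10}+t_1t_2t_4 s^2))$, which is the stated expression once the linear factor is reinstated.

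For $\cC_{1,2,4,q^7}^{p=2}$ the representative is the case $t_{10} = 0$ of the above (adjoining the central factor $x_{12}(t_{12})$ costs only $\phi(a_{12}t_{12})$), so the same calculation produces $q\phi(a_{12}t_{12})\sum_{s \in \F_q}\phi(a_{12}t_1t_2t_4 s^2)$; since $s \mapsto s^2$ is a bijection of $\F_q$ in characteristic $2$ and $a_{12}t_1t_2t_4 \in \F_q^\times$, this sum equals $\sum_{s \in \F_q}\phi(a_{12}t_1t_2t_4 s) = 0$, giving the value $0$. I do not expect a genuine obstacle: the argument is the one already carried out for $\cC_{1,2,4,q^7}$, with the single additional commutator $[x_5,x_{10}]$ accounted for and with the characteristic-$2$ simplification of the Gauss sum; the only point requiring care is to collect the root-element factors into the order used in $x_V(\bt)$ before applying $\lambda_2$, so that the coordinates fed into \eqref{e:lambda2} are the correct ones.
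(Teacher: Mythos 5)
Your proposal is correct and is exactly the route the paper intends: the paper omits the proof of Lemma \ref{l:12p2}, stating only that it is "a minor modification" of the $\cC_{1,2,4,q^7}$ computation in Proposition \ref{P:12}(c), and you carry out precisely that modification — tracking the extra $x_{12}(s_5t_{10})$ term from $[x_5,x_{10}]$ and using that $s\mapsto s^2$ is a bijection in characteristic $2$ to make the Gauss sum vanish for the second class. The one small point worth noting is that your substitution $s=t_1^{-1}s_5$ is the correct one (the paper's proof of Proposition \ref{P:12}(c) states the substitution as $s=t_1s_5$, which appears to be a typo).
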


\end{document}